\theoremstyle{definition}
\newtheorem{definition}{Definition}[section]
\theoremstyle{plain}
\newtheorem{theorem}[definition]{Theorem}
\newtheorem{proposition}[definition]{Proposition}
\newtheorem{lemma}[definition]{Lemma}
\newtheorem{corollary}[definition]{Corollary}
\newtheorem{remark}[definition]{Remark}
\numberwithin{equation}{section}
\title{Hopf images of coactions and effective symmetry of quantum principal bundles}
\author{Arnab Bhattacharjee \thanks{Mathematical Institute of Charles University, email: arnabbhatta7@gmail.com\\ \\2020 \emph{Mathematics Subject Classification.} Primary: 16T20; Secondary: 58B32, 81R50\\ \emph{Key words and phrases}.
Hopf images, quantum principal bundles, inner--faithful coactions,
quantum groups, noncommutative geometry.\\ The author acknowledges support from HORIZON-MSCA-2021-SE-01-CaLIGOLA}}
\date{}
\begin{document}

\maketitle

\begin{abstract}
We introduce Hopf images of coactions of Hopf algebras and develop their role in
the geometry of quantum principal bundles.
Assuming cosemisimplicity of the structure Hopf algebra, we show that every
quantum principal bundle equipped with a right-covariant first-order
differential calculus admits a canonical and functorial reduction to one with
inner--faithful quantum symmetry.
This yields a classification of quantum principal bundles up to effective
quantum symmetry and a rigidity result identifying the minimal effective
symmetry acting on the reduced total space.
Examples from quantum groups are discussed.
\end{abstract}

\tableofcontents

\section{Introduction}

Symmetry is a fundamental organizing principle in geometry.
In classical differential geometry, symmetries of spaces are encoded by smooth
actions of Lie groups, and principal bundles provide the natural framework in
which global symmetries and local geometric data interact.
A well-known phenomenon in this setting is that a group action need not be
effective: a nontrivial normal subgroup may act trivially on the total space.
In such cases, one may pass canonically to a reduced principal bundle with
effective structure group by quotienting out the kernel of the action.

In noncommutative geometry, symmetry is typically encoded by coactions of Hopf
algebras or, in analytic settings, by actions of compact quantum groups \cite{banica2010quantum, bhowmick2009quantum, goswami2009quantum, Wang1998Quantum}.
Quantum principal bundles, introduced algebraically by Brzezi\'nski and Majid
\cite{brzezinski1993quantum} and further developed in various directions
\cite{brzezinski1999coalgebra, hajac1996strong, schneider1990principal}, provide a natural extension of
the classical notion of principal bundles to the noncommutative realm.
Despite the extensive development of the theory, comparatively little attention
has been paid to the question of \emph{effectiveness} of quantum symmetries:
given a Hopf algebra coaction on a quantum principal bundle, it is not a priori
clear whether the full Hopf algebra acts faithfully on the total space, or
whether part of the symmetry is redundant.

A related notion has appeared in the theory of quantum groups under the name of
\emph{inner--faithful actions} and \emph{Hopf images}.
Originally introduced by Banica and Bichon in the context of representations of
Hopf algebras \cite{banica2010hopf}, the Hopf image captures the smallest Hopf
quotient through which a given representation factors.
Dual formulations of this construction, emphasizing Hopf subalgebras rather than
quotients, have since appeared in various contexts
\cite{chirvasitu2019quantum, etingof2017hopf}.
These ideas suggest a natural approach to extracting the effective part of a
quantum symmetry directly from a given coaction.

The first goal of this paper is to develop a systematic theory of Hopf images for
\emph{coactions} of Hopf algebras on algebras.
Given a right coaction
\[
\delta : A \longrightarrow A \otimes H
\]
of a Hopf algebra $H$ on an algebra $A$, we define the \emph{Hopf image} of
$\delta$ as the smallest Hopf subalgebra $H_\delta \subseteq H$ through which the
coaction factors.
This Hopf image is characterized by a universal property and depends only on the
effective part of the coaction.
We call a coaction \emph{inner--faithful} if its Hopf image coincides with the
ambient Hopf algebra.
In this sense, inner--faithful coactions provide the appropriate notion of
effective quantum symmetry, extending the classical concept of effective group
actions.

The second and main objective of the paper is to apply this construction to
quantum principal bundles equipped with a right-covariant first-order
differential calculus.
Assuming that the structure Hopf algebra is cosemisimple, we show that every
quantum principal bundle admits a canonical reduction to a quantum principal
bundle whose structure Hopf algebra acts inner--faithfully.
This reduction is achieved by passing to a suitable quotient of the total space
algebra by a largest coaction-stable two-sided ideal, allowing the Hopf image
coaction to descend.
The resulting object, which we call the \emph{Hopf-image reduction}, may be
viewed as the noncommutative analogue of reducing a classical principal bundle
to one with effective structure group.

Beyond existence, we prove that Hopf-image reduction is \emph{functorial}.
More precisely, we introduce a category of quantum principal bundles with
cosemisimple structure Hopf algebras and show that Hopf-image reduction defines a
functor to the full subcategory consisting of bundles with inner--faithful
coactions.
This leads naturally to an equivalence relation on quantum principal bundles,
interpreted as equivalence up to effective quantum symmetry, and reduces the
classification problem to that of inner--faithful quantum principal bundles.

Finally, we establish a rigidity result showing that Hopf-image reduction yields
the \emph{minimal} effective quantum symmetry acting on the reduced total space:
any other inner--faithful quantum symmetry acting on the same reduced bundle must
contain the Hopf image as a Hopf subalgebra.
This result shows that Hopf-image reduction is not merely canonical, but rigid in
a precise sense.

We illustrate the general theory with examples of inner--faithful coactions,
including the coproduct of Hopf algebras, as well as
coactions arising from quantized coordinate algebras of complex semisimple Lie
groups.
These examples demonstrate that effective quantum symmetry arises naturally in
quantum group geometry and highlight the relevance of Hopf images in concrete
geometric situations.

\medskip
\noindent
\textbf{Structure of the paper.}
In Section~2 we recall basic notions concerning Hopf algebras, coactions,
first-order differential calculi, and quantum principal bundles.
Section~3 introduces the Hopf image of a coaction, establishes its basic
properties, and develops the notion of inner--faithful coactions as a precise
formulation of effective quantum symmetry.
In Section~4 we construct the Hopf-image reduction of a quantum principal
bundle and prove that, under the assumption of cosemisimplicity, this reduction
yields a new quantum principal bundle with inner--faithful structure Hopf
algebra.
Section~5 is devoted to functoriality and classification: we define the
appropriate categories of quantum principal bundles, show that Hopf-image
reduction is functorial and essentially surjective, and formulate a
classification result up to effective quantum symmetry.
Finally, in Section~6 we present examples of inner--faithful coactions,
including those arising from quantized coordinate algebras of complex
semisimple Lie groups and from canonical Hopf algebra coactions.

\medskip
\noindent

\section{Preliminaries}

\subsection{Hopf algebras and coactions}

Throughout the paper, $H$ denotes a Hopf algebra with coproduct $\Delta$,
counit $\varepsilon$, and a bijective antipode $S$.
An algebra $A$ is always assumed to be associative and unital. In this section we follow literature from text books \cite{beggs2020quantum, brzezinski2003corings, klimyk2012quantum}.

\begin{definition}
A \emph{right coaction} of a Hopf algebra $H$ on an algebra $A$ is a linear map
\[
\delta : A \longrightarrow A \otimes H
\]
satisfying
\[
(\delta \otimes \mathrm{id}_{H})\circ \delta
= (\mathrm{id}_{A} \otimes \Delta)\circ \delta,
\qquad
(\mathrm{id}_{A} \otimes \varepsilon)\circ \delta = \mathrm{id}_{A}.
\]
\end{definition}

We use Sweedler notation $\delta(a)= a_{(0)}\otimes a_{(1)}$.
\subsection{First order differential calculus}

A \emph{first-order differential calculus} (fodc) over an algebra $A$ is a pair $(\Omega^1(A), \mathrm{d})$, where $\Omega^1(A)$ is a $A$-bimodule and $\mathrm{d}: A \to \Omega^1(A)$ is a derivation such that $\Omega^1(A)$ is generated as a left $A$-module by those elements of the form $\mathrm{d}a$, for $a \in A$.
\subsection{Quantum principal bundles}

Let $H$ be a Hopf algebra and $(A, \delta)$ be a right $H$-comodule algebra. A fodc over $A$ is right $H$-covariant if there exist (well defined) a morphism
\[
\Delta_{R}: \Omega^{1}(A)\rightarrow \Omega^{1}(A)\otimes H, \quad \Delta_{R}(a\mathrm{d}b)= a_{(0)}\mathrm{d}b_{(0)}\otimes a_{(1)}b_{(1)}.
\]
\begin{definition}
   A \emph{quantum principal $H$-bundle} is a pair $(A,\Omega^{1}(A))$ consisting of right $H$-comodule algebra $(A, \delta)$ and right $H$-covariant fodc $\Omega^{1}(A)$ such that
   \begin{enumerate}
       \item $A$ is a Hopf-Galois extension of $B= A^{co(H)}=\lbrace a\in A : \delta(a)= a\otimes 1_{H}\rbrace$ this means
       $$\mathrm{can}:= (\cdot\otimes \mathrm{id})\circ (\mathrm{id}\otimes \Delta_{R}): A\otimes_{B}A\rightarrow A\otimes H$$ is a bijection.
       \item If $N\subset \Omega^{1}_{u}(A)$ is the $A$-sub-bimodule of the universal calculus corresponding to $\Omega^{1}(A)$, we have $\mathrm{ver}(N)= A\otimes I$, for some $\mathrm{Ad}_{R}$-subcomodule right ideal $I\subseteq H^{+}$, where $\mathrm{Ad}_{R}: H\rightarrow H\otimes H$ and is defined as $\mathrm{Ad}_{R}(h)= h_{(2)}\otimes S(h_{(1)})h_{(3)}$ and $H^{+}= \mathrm{Ker}(\varepsilon_{H})$.
   \end{enumerate}
\end{definition}

In this paper, the emphasis is placed on the coaction $\delta$ as the carrier of
quantum symmetry.

\section{Hopf image of a coaction}
The notion of Hopf image, introduced by Banica and Bichon \cite{banica2010hopf} for representations of
Hopf algebras, provides a canonical way to isolate the effective quantum symmetry
of an action.
In the present section, we develop a dual formulation of this concept for coactions
of Hopf algebras on algebras.
Replacing Hopf quotients by Hopf subalgebras, we define the Hopf image of a
coaction as a universal factorization object encoding its effective symmetry.
This construction plays a central role in the reduction and classification of
quantum principal bundles considered in later sections.
 
\subsection{Factorizations of coactions}

\begin{definition}\label{def:facto}
Let $\delta:A\to A\otimes H$ be a right coaction of a Hopf algebra $H$ on an algebra $A$. A \emph{factorization} of $\delta$ is a triple $(L,\iota_L,\delta_L)$ where
$L\subseteq H$ is a Hopf subalgebra with inclusion $\iota_L:L\hookrightarrow H$
and $\delta_L:A\to A\otimes L$ is a right $L$-coaction such that
\[
\delta=(\mathrm{id}\otimes\iota_L)\circ\delta_L.
\]
\end{definition}

\subsection{Hopf image of a coaction}

\begin{definition}\label{def:Hopfimage}
Let $\delta:A\to A\otimes H$ be a right coaction of a Hopf algebra $H$ on an algebra $A$. The \emph{Hopf image} of $\delta$, denoted $H_\delta$, is the initial object in
the category of factorizations of $\delta$.
\end{definition}

\begin{theorem}\label{thm:Hopfimage}
Let $H$ be a Hopf algebra, $A$ an algebra, and
\[
\delta : A \longrightarrow A \otimes H
\]
a right coaction. Then there exists a Hopf subalgebra
\[
H_\delta \subseteq H
\]
and a right $H_\delta$-coaction
\[
\delta_{\mathrm{im}} : A \longrightarrow A \otimes H_\delta
\]
such that:
\begin{enumerate}
\item $\delta = (\mathrm{id}_A \otimes \iota_{H_{\delta}})\circ \delta_{\mathrm{im}}$, where
      $\iota_{H_{\delta}} : H_\delta \hookrightarrow H$ is the inclusion;
\item $(H_\delta,\iota_{H_{\delta}},\delta_{\mathrm{im}})$ is the initial object in the category of factorizations of $\delta$.
\end{enumerate}
Equivalently, $H_\delta$ is the smallest Hopf subalgebra $L \subseteq H$ such that
\[
\delta(A) \subseteq A \otimes L .
\]
\end{theorem}
\begin{proof}
Let $\mathcal{S}$ denote the collection of all Hopf subalgebras
$L \subseteq H$ satisfying $\delta(A) \subseteq A \otimes L$.
Define
\[
H_\delta := \bigcap_{L \in \mathcal{S}} L .
\]

Since intersections of subalgebras (resp.\ subcoalgebras) are again
subalgebras (resp.\ subcoalgebras), and the antipode preserves
intersections, $H_\delta$ is a Hopf subalgebra of $H$.

By construction, $\delta(A) \subseteq A \otimes L$ for every
$L \in \mathcal{S}$, hence
\[
\delta(A) \subseteq A \otimes \bigcap_{L \in \mathcal{S}} L
= A \otimes H_\delta .
\]
Thus $\delta$ uniquely factors as
\[
\delta = (\mathrm{id}_A \otimes \iota_{H_{\delta}})\circ \delta_{\mathrm{im}},
\]
where $\delta_{\mathrm{im}}$ is the restriction of $\delta$ to
$A \otimes H_\delta$.

Now let $(L,\iota_{L},\delta_L)$ be any factorization of $\delta$.
By definition, $L \in \mathcal{S}$, hence $H_\delta \subseteq L$.
The inclusion $f : H_\delta \hookrightarrow L$ is the unique Hopf algebra
morphism satisfying
\[
\delta_L = (\mathrm{id}_A \otimes f)\circ \delta_{\mathrm{im}}
\quad\text{and}\quad
(\mathrm{id}_{A}\otimes \iota_{L})\circ (\mathrm{id}_{A}\otimes f)= \mathrm{id}_{A}\otimes \iota_{H_{\delta}}
\]
Therefore $(H_\delta,\iota_{H_{\delta}},\delta_{\mathrm{im}})$ is initial among
all factorizations of $\delta$.
\end{proof}

\begin{proposition}\label{prop:generator}
Let $H$ be a Hopf algebra, $A$ an algebra, and
\[
\delta : A \longrightarrow A \otimes H
\]
a right $H$-coaction. Then the Hopf image $H_\delta$ of $\delta$ coincides with
the Hopf subalgebra of $H$ generated by the set
\[
\mathcal{C}_\delta
=
\{\, (\omega \otimes \mathrm{id})\delta(a)
\mid a \in A,\ \omega \in A^* \,\}.
\]
\end{proposition}

\begin{proof}
Let $K \subseteq H$ be the Hopf subalgebra generated by the set
$\mathcal{C}_\delta$.
For each $a \in A$, write $\delta(a)=\sum a_{(0)}\otimes a_{(1)}$.
Then $a_{(1)}\in\mathcal{C}_\delta\subseteq K$, hence
$\delta(a)\in A\otimes K$ for all $a\in A$.
Thus $\delta$ factors through $K$, and therefore $H_\delta\subseteq K$.

Since $\delta(A)\subseteq A\otimes H_\delta$, applying
$(\omega\otimes\mathrm{id})$ shows that
$\mathcal{C}_\delta\subseteq H_\delta$.
Because $H_\delta$ is a Hopf subalgebra, it contains the Hopf subalgebra
generated by $\mathcal{C}_\delta$, namely $K$.
Therefore $K\subseteq H_\delta$.
Combining the two inclusions yields $H_\delta=K$.
\end{proof}

\begin{lemma}\label{lem:minimality}
Let $H$ be a Hopf algebra, $A$ an algebra, and
\[
\delta : A \longrightarrow A \otimes H
\]
a right $H$-coaction. Let $(L,\iota_L,\delta_L)$ be any factorization of $\delta$, that is,
$L \subseteq H$ is a Hopf subalgebra with inclusion $\iota_L : L \hookrightarrow H$ and
$\delta_L : A \to A \otimes L$ is a right $L$-coaction such that
\[
\delta = (\mathrm{id}_A \otimes \iota_L)\circ \delta_L .
\]
Then the Hopf image $H_\delta$ of $\delta$ satisfies
\[
H_\delta \subseteq L .
\]
\end{lemma}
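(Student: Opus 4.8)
The plan is to observe that this lemma is essentially an immediate consequence of the construction of $H_\delta$ carried out in Theorem~\ref{thm:Hopfimage}, together with the characterisation of $H_\delta$ as the \emph{smallest} Hopf subalgebra through which $\delta$ factors. The only genuine content is to verify that every factorization $(L,\iota_L,\delta_L)$ places $L$ inside the collection $\mathcal{S}$ of Hopf subalgebras over which $\delta$ was defined to range.

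First I would unpack the factorization hypothesis. The identity $\delta = (\mathrm{id}_A \otimes \iota_L)\circ \delta_L$ says precisely that, viewing $\delta$ as a map into $A \otimes H$, its image lies in the image of $\mathrm{id}_A \otimes \iota_L$. Since $\iota_L : L \hookrightarrow H$ is the inclusion of a Hopf subalgebra and tensoring over the ground field is exact, $\mathrm{id}_A \otimes \iota_L$ is injective with image $A \otimes L$ (regarded as a subspace of $A \otimes H$). Hence $\delta(A) \subseteq A \otimes L$. As $L$ is by assumption a Hopf subalgebra of $H$, this exhibits $L$ as a member of the family $\mathcal{S} = \{\, L' \subseteq H \text{ Hopf subalgebra} : \delta(A) \subseteq A \otimes L' \,\}$ appearing in the proof of Theorem~\ref{thm:Hopfimage}.

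Next I would invoke the explicit description $H_\delta = \bigcap_{L' \in \mathcal{S}} L'$ established there. Since $L \in \mathcal{S}$, the intersection is contained in $L$, giving $H_\delta \subseteq L$ at once. Equivalently, one may argue through Proposition~\ref{prop:generator}: writing $\delta(a) = (\mathrm{id}_A \otimes \iota_L)\delta_L(a)$ with $\delta_L(a) \in A \otimes L$, every generator $(\omega \otimes \mathrm{id})\delta(a)$ of $H_\delta$ lies in $L$, so $\mathcal{C}_\delta \subseteq L$; as $L$ is a Hopf subalgebra, the Hopf subalgebra it generates, namely $H_\delta$, is contained in $L$.

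I do not expect any real obstacle: the statement is a reformulation of the minimality already built into the definition and construction of the Hopf image, and it could even be recorded as a corollary rather than a lemma. The only point requiring a word of care is the identification of $L$ with its image $\iota_L(L) \subseteq H$ together with the injectivity of $\mathrm{id}_A \otimes \iota_L$, which guarantees that the factorization is unambiguous and that the condition $\delta(A) \subseteq A \otimes L$ is literally the membership criterion for $\mathcal{S}$.
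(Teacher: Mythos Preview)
Your proposal is correct and follows essentially the same approach as the paper: the paper's proof also observes that a factorization $(L,\iota_L,\delta_L)$ forces $\delta(A)\subseteq A\otimes L$, so $L$ belongs to the defining family $\mathcal{S}$ and hence contains the intersection $H_\delta$. Your additional remark via Proposition~\ref{prop:generator} is a valid alternative, but the core argument is identical.
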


\begin{proof}
By definition, the Hopf image $H_\delta$ is the smallest Hopf subalgebra
$K \subseteq H$ such that $\delta(A) \subseteq A \otimes K$.
Since $(L,\iota_L,\delta_L)$ is a factorization of $\delta$, we have
$\delta(A) \subseteq A \otimes L$, hence $L$ is one of the Hopf subalgebras
appearing in the defining intersection for $H_\delta$.
Therefore $H_\delta \subseteq L$, as claimed.
\end{proof}

\begin{remark}
Our definition of the Hopf image of a coaction uses the largest possible class
of factorizations, namely all Hopf subalgebras $L \subseteq H$ through which the
coaction factors. As a consequence, the Hopf image depends only on the effective
part of the coaction and is independent of any chosen ambient Hopf algebra
containing it.
\end{remark}

\subsection{Inner-faithful coactions}

\begin{definition}\label{def:innerfaithful}
Let $H$ be a Hopf algebra, $A$ an algebra, and
\[
\delta : A \longrightarrow A \otimes H
\]
a right $H$-coaction. Let $H_\delta \subseteq H$ denote the Hopf image of $\delta$.
The coaction $\delta$ is called \emph{inner faithful} if
\[
H_\delta = H .
\]
\end{definition}

\begin{proposition}\label{prop:innerfaithful}
Let $H$ be a Hopf algebra, $A$ an algebra, and
\[
\delta : A \longrightarrow A \otimes H
\]
a right $H$-coaction. Let $H_\delta \subseteq H$ denote the Hopf image of $\delta$.
Then the following statements are equivalent:
\begin{enumerate}
\item $\delta$ is inner faithful, i.e.\ $H_\delta = H$;
\item There exists no proper Hopf subalgebra $L \subsetneq H$ such that
      $\delta(A) \subseteq A \otimes L$;
\item For every factorization $(L,\iota_L,\delta_L)$ of $\delta$, the inclusion
      $\iota_L : L \hookrightarrow H$ is an isomorphism.
\end{enumerate}
\end{proposition}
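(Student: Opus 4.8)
The plan is to prove the equivalence by establishing the cycle of implications $(1)\Rightarrow(2)\Rightarrow(3)\Rightarrow(1)$, drawing on the characterization of $H_\delta$ from Theorem~\ref{thm:Hopfimage} as the smallest Hopf subalgebra $L\subseteq H$ with $\delta(A)\subseteq A\otimes L$, together with the minimality statement of Lemma~\ref{lem:minimality}.

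For $(1)\Rightarrow(2)$, I would argue by contraposition: suppose there exists a proper Hopf subalgebra $L\subsetneq H$ with $\delta(A)\subseteq A\otimes L$. Then $L\in\mathcal{S}$ in the notation of Theorem~\ref{thm:Hopfimage}, so by the defining intersection we have $H_\delta\subseteq L\subsetneq H$, whence $H_\delta\neq H$ and $\delta$ is not inner faithful. This is the most routine step. For $(2)\Rightarrow(3)$, let $(L,\iota_L,\delta_L)$ be any factorization of $\delta$. By Definition~\ref{def:facto}, $L$ is a Hopf subalgebra with $\delta=(\mathrm{id}_A\otimes\iota_L)\circ\delta_L$, and applying $(\mathrm{id}_A\otimes\varepsilon_H)$ or simply reading off the image shows $\delta(A)\subseteq A\otimes L$. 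If $\iota_L$ were not an isomorphism, then $L$ would be a proper Hopf subalgebra satisfying the containment, contradicting $(2)$; hence $\iota_L$ is an isomorphism (here ``isomorphism'' means the inclusion is surjective, i.e.\ $L=H$).

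For $(3)\Rightarrow(1)$, I would apply the hypothesis to the canonical factorization $(H_\delta,\iota_{H_\delta},\delta_{\mathrm{im}})$ supplied by Theorem~\ref{thm:Hopfimage}, which is indeed a factorization of $\delta$. Statement $(3)$ then forces $\iota_{H_\delta}:H_\delta\hookrightarrow H$ to be an isomorphism, so $H_\delta=H$, which is exactly inner faithfulness. This closes the cycle.

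The main subtlety, rather than a genuine obstacle, lies in pinning down the meaning of ``isomorphism'' for an inclusion map in $(3)$: since $\iota_L$ is by hypothesis an inclusion of a Hopf subalgebra, it is automatically injective, so asserting it is an isomorphism is equivalent to asserting surjectivity, i.e.\ $L=H$ as Hopf subalgebras of $H$. Once this identification is made explicit, each implication is a direct consequence of the universal/minimality characterization of the Hopf image, and no separate verification of coalgebra or antipode compatibility is needed, since these are already guaranteed by the definition of a factorization. I expect the entire argument to be short and formal, with the only care required being the consistent use of Theorem~\ref{thm:Hopfimage} and Lemma~\ref{lem:minimality} to translate between the ``smallest Hopf subalgebra'' and ``initial factorization'' viewpoints.
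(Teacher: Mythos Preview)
Your proposal is correct and follows essentially the same cycle $(1)\Rightarrow(2)\Rightarrow(3)\Rightarrow(1)$ as the paper's proof, with the same use of the minimality of $H_\delta$ and the canonical factorization $(H_\delta,\iota_{H_\delta},\delta_{\mathrm{im}})$. Your contrapositive phrasing of $(1)\Rightarrow(2)$ is logically equivalent to the paper's direct argument, and your remark on the meaning of ``isomorphism'' for an inclusion is apt.
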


\begin{proof}
$(1) \Rightarrow (2)$.
If $\delta(A) \subseteq A \otimes L$ for some Hopf subalgebra $L \subseteq H$,
then $(L,\iota_L,\delta_L)$ is a factorization of $\delta$.
By minimality of the Hopf image, $H_\delta \subseteq L$.
If $L$ were proper, this would contradict $H_\delta = H$.
Hence no such proper Hopf subalgebra exists.

$(2) \Rightarrow (3)$.
Let $(L,\iota_L,\delta_L)$ be any factorization of $\delta$.
Then $\delta(A) \subseteq A \otimes L$, so by assumption $L = H$.
Therefore $\iota_L$ is an isomorphism.

$(3) \Rightarrow (1)$.
By definition, $(H_\delta,\iota_{H_{\delta}},\delta_{\mathrm{im}})$ is a factorization
of $\delta$. Applying $(3)$ to this factorization shows that
$\iota_{H_{\delta}} : H_\delta \hookrightarrow H$ is an isomorphism, hence
$H_\delta = H$.
\end{proof}

\begin{proposition}\label{prop:universitality}
Let $H$ be a Hopf algebra, $A$ an algebra, and
\[
\delta : A \longrightarrow A \otimes H
\]
a right $H$-coaction. Let $(H_\delta,\iota_{H_\delta},\delta_{\mathrm{im}})$ denote
the Hopf image of $\delta$, where
$\iota_{H_\delta} : H_\delta \hookrightarrow H$ is the inclusion and
$\delta_{\mathrm{im}} : A \to A \otimes H_\delta$ is the induced coaction. Then, the coaction $\delta_{\mathrm{im}}$ is inner faithful, and conversely, if $(L,\iota_L,\delta_L)$ is a factorization of $\delta$ such that
      the coaction $\delta_L : A \to A \otimes L$ is inner faithful, then
      $L \cong H_\delta$ as Hopf algebras.
\end{proposition}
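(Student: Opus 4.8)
The plan is to establish the two assertions separately, drawing on the generator description of Proposition~\ref{prop:generator} for the first and on the minimality Lemma~\ref{lem:minimality} together with the characterization of Proposition~\ref{prop:innerfaithful} for the second. For the inner faithfulness of $\delta_{\mathrm{im}}$, I would apply Proposition~\ref{prop:generator} directly to the right $H_\delta$-coaction $\delta_{\mathrm{im}} : A \to A \otimes H_\delta$: its Hopf image is the Hopf subalgebra of $H_\delta$ generated by the set $\mathcal{C}_{\delta_{\mathrm{im}}} = \{(\omega \otimes \mathrm{id})\delta_{\mathrm{im}}(a) \mid a \in A,\ \omega \in A^*\}$. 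The key observation is that the inclusion $\iota_{H_\delta}$ intertwines the two generating sets, since $(\omega \otimes \mathrm{id}_H)\delta(a) = \iota_{H_\delta}\bigl((\omega \otimes \mathrm{id}_{H_\delta})\delta_{\mathrm{im}}(a)\bigr)$, so that $\mathcal{C}_\delta = \iota_{H_\delta}(\mathcal{C}_{\delta_{\mathrm{im}}})$. Because $\iota_{H_\delta}$ is an injective Hopf algebra morphism, it carries the Hopf subalgebra of $H_\delta$ generated by $\mathcal{C}_{\delta_{\mathrm{im}}}$ isomorphically onto the Hopf subalgebra of $H$ generated by $\mathcal{C}_\delta$, which by Proposition~\ref{prop:generator} is all of $H_\delta$. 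Injectivity then forces the generated subalgebra inside $H_\delta$ to be $H_\delta$ itself, so $\delta_{\mathrm{im}}$ is inner faithful.

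For the converse, I would begin with Lemma~\ref{lem:minimality}, which yields $H_\delta \subseteq L$; write $j : H_\delta \hookrightarrow L$ for the resulting inclusion, so that $\iota_{H_\delta} = \iota_L \circ j$. The central step is to recognize that $(H_\delta, j, \delta_{\mathrm{im}})$ is itself a factorization of the coaction $\delta_L$. Indeed, from
\[
(\mathrm{id}_A \otimes \iota_L)\circ \delta_L = \delta = (\mathrm{id}_A \otimes \iota_{H_\delta})\circ \delta_{\mathrm{im}} = (\mathrm{id}_A \otimes \iota_L)\circ (\mathrm{id}_A \otimes j)\circ \delta_{\mathrm{im}},
\]
together with the injectivity of $\mathrm{id}_A \otimes \iota_L$, I obtain $\delta_L = (\mathrm{id}_A \otimes j)\circ \delta_{\mathrm{im}}$, exhibiting $\delta_L$ as factoring through the Hopf subalgebra $j(H_\delta) \subseteq L$. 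Since $\delta_L$ is assumed inner faithful, Proposition~\ref{prop:innerfaithful}(3), applied now with $L$ in the role of the ambient Hopf algebra, forces the inclusion $j$ of this factorization to be an isomorphism. Hence $L \cong H_\delta$ as Hopf algebras.

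The only point requiring care is the cancellation of $\mathrm{id}_A \otimes \iota_L$ from both sides, which is legitimate because over the ground field tensoring the injection $\iota_L$ with the vector space $A$ preserves injectivity; and one must be careful to apply Proposition~\ref{prop:innerfaithful} with the correct ambient Hopf algebra $L$ rather than $H$. Neither of these is a genuine obstacle: all the substantive content is carried by the earlier results, and the proof amounts to correctly setting up the factorization $(H_\delta, j, \delta_{\mathrm{im}})$ of $\delta_L$ and recognizing that inner faithfulness of $\delta_{\mathrm{im}}$ is precisely the statement that its own generating set regenerates $H_\delta$. The main conceptual move—viewing $\delta_{\mathrm{im}}$ as a factorization of $\delta_L$—is what transfers the universality of the Hopf image into the desired uniqueness up to isomorphism.
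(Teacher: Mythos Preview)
Your proof is correct. For the converse direction it is essentially identical to the paper's argument: both establish $H_\delta \subseteq L$ by minimality, then show that $\delta_L$ factors through $H_\delta$ and invoke inner faithfulness of $\delta_L$ to force equality; you are simply more explicit about the cancellation of $\mathrm{id}_A \otimes \iota_L$ and phrase the conclusion via Proposition~\ref{prop:innerfaithful}(3) rather than via the equation $L_{\delta_L}=H_\delta$.

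For the inner faithfulness of $\delta_{\mathrm{im}}$ you take a slightly different route. The paper argues directly by contradiction from minimality: if $\delta_{\mathrm{im}}$ factored through a proper Hopf subalgebra $K \subsetneq H_\delta$, then $(K,\iota_K,\delta_K)$ would already be a factorization of $\delta$, contradicting the definition of $H_\delta$ as the smallest such. You instead invoke the generator description of Proposition~\ref{prop:generator}, comparing the coefficient sets $\mathcal{C}_\delta$ and $\mathcal{C}_{\delta_{\mathrm{im}}}$ under the injective Hopf map $\iota_{H_\delta}$. Both arguments are short and valid; the paper's avoids any appeal to Proposition~\ref{prop:generator} and is marginally more self-contained, while yours makes transparent \emph{why} the Hopf image coaction regenerates all of $H_\delta$, which is a useful perspective when one later needs to track coefficients explicitly.
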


\begin{proof}
By definition, $(H_\delta,\iota_{H_\delta},\delta_{\mathrm{im}})$ is a factorization
of $\delta$ and $H_\delta$ is the smallest Hopf subalgebra of $H$ such that
$\delta(A) \subseteq A \otimes H_\delta$.
Suppose that $\delta_{\mathrm{im}}$ factors through a proper Hopf subalgebra
$K \subsetneq H_\delta$, i.e.\ $\delta_{\mathrm{im}}(A) \subseteq A \otimes K$.
Then $(K,\iota_K,\delta_K)$ would be a factorization of $\delta$, contradicting
the minimality of $H_\delta$. Hence $\delta_{\mathrm{im}}$ is inner faithful.

Conversely, let $(L,\iota_L,\delta_L)$ be a factorization of $\delta$ such that $\delta_L$
is inner faithful. By minimality of the Hopf image, we have
$H_\delta \subseteq L$.
On the other hand, since $\delta = (\mathrm{id}\otimes\iota_L)\circ\delta_L$,
the image of $\delta_L$ is contained in $A \otimes L$, and by definition of the
Hopf image we also have $L_\delta = H_\delta$ for the coaction $\delta_L$.
Inner faithfulness of $\delta_L$ implies $L = L_\delta = H_\delta$.
Therefore $L \cong H_\delta$ as Hopf algebras.
\end{proof}
\begin{remark}
The Hopf image thus provides the unique factorization of a coaction with
inner-faithful structure Hopf algebra. In particular, every coaction admits a
canonical reduction to an inner-faithful one by passing to its Hopf image.
\end{remark}
\begin{proposition}
Let $H$ be a Hopf algebra and
\[
\delta_A : A \longrightarrow A \otimes H
\]
a right $H$-coaction on an algebra $A$. Let $\theta : A \to B$ be an algebra
isomorphism, and define a right $H$-coaction on $B$ by
\[
\delta_B := (\theta \otimes \mathrm{id}) \circ \delta_A \circ \theta^{-1}
: B \longrightarrow B \otimes H .
\]
Then the Hopf images of $\delta_A$ and $\delta_B$ are isomorphic as Hopf
algebras. More precisely,
\[
H_{\delta_A} = H_{\delta_B} \subseteq H .
\]
\end{proposition}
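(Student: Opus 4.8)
The plan is to reduce everything to the generator description of the Hopf image established in Proposition~\ref{prop:generator}, according to which $H_{\delta_A}$ (resp.\ $H_{\delta_B}$) is the Hopf subalgebra of $H$ generated by the set
\[
\mathcal{C}_{\delta_A}=\{(\omega\otimes\mathrm{id})\delta_A(a)\mid a\in A,\ \omega\in A^*\}
\]
(resp.\ $\mathcal{C}_{\delta_B}$). Since $\theta$ acts only on the first tensor factor and leaves the $H$-component untouched, I expect the two generating sets to coincide \emph{as subsets of $H$}, which yields not merely an isomorphism but the literal equality $H_{\delta_A}=H_{\delta_B}$ asserted in the statement.

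First I would verify (or simply record, since the statement already calls $\delta_B$ a coaction) that $\delta_B$ is a genuine right $H$-coaction. Coassociativity and counitality follow from the corresponding identities for $\delta_A$ together with the fact that $\theta$ is an algebra isomorphism, so that each intermediate $\theta^{-1}$ cancels against a subsequent $\theta$; in Sweedler notation, writing $a=\theta^{-1}(b)$ one checks directly that $(\delta_B\otimes\mathrm{id})\circ\delta_B$ and $(\mathrm{id}\otimes\Delta)\circ\delta_B$ both send $b$ to $\sum\theta(a_{(0)})\otimes a_{(1)}\otimes a_{(2)}$, and that $(\mathrm{id}\otimes\varepsilon)\circ\delta_B=\mathrm{id}_B$. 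This step is routine bookkeeping.

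The core of the argument is the identity
\[
(\psi\otimes\mathrm{id})\,\delta_B(b)=\bigl((\psi\circ\theta)\otimes\mathrm{id}\bigr)\,\delta_A(\theta^{-1}(b)),\qquad b\in B,\ \psi\in B^*,
\]
obtained by unfolding $\delta_B=(\theta\otimes\mathrm{id})\circ\delta_A\circ\theta^{-1}$ and using $(\psi\otimes\mathrm{id})\circ(\theta\otimes\mathrm{id})=(\psi\circ\theta)\otimes\mathrm{id}$. Because $\theta$ is linear, $\psi\circ\theta\in A^*$, so the right-hand side lies in $\mathcal{C}_{\delta_A}$; this gives $\mathcal{C}_{\delta_B}\subseteq\mathcal{C}_{\delta_A}$. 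The reverse inclusion follows by symmetry: given $\omega\in A^*$ and $a\in A$, set $\psi=\omega\circ\theta^{-1}\in B^*$ and $b=\theta(a)$, so that the same computation yields $(\omega\otimes\mathrm{id})\delta_A(a)=(\psi\otimes\mathrm{id})\delta_B(b)\in\mathcal{C}_{\delta_B}$. Hence $\mathcal{C}_{\delta_A}=\mathcal{C}_{\delta_B}$, and applying Proposition~\ref{prop:generator} to both coactions gives $H_{\delta_A}=H_{\delta_B}$ as Hopf subalgebras of $H$.

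There is essentially no hard step here: the only point requiring care is that precomposition with $\theta^{-1}$ furnishes a bijection $A^*\to B^*$ matching the functionals that index the two generating families. The mild conceptual subtlety worth flagging is that the asserted conclusion is a literal set equality inside $H$, not just an abstract isomorphism—this is exactly what the generator characterization delivers, since twisting the algebra by $\theta$ never alters the $H$-component of $(\omega\otimes\mathrm{id})\delta(a)$.
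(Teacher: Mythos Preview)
Your proof is correct and follows essentially the same approach as the paper: both arguments invoke the coefficient description of the Hopf image (Proposition~\ref{prop:generator}) and observe that twisting by $\theta$ on the first tensor leg leaves the $H$-coefficients unchanged, so the generating sets agree. Your version is in fact slightly more precise, since you exhibit the bijection $A^*\to B^*$, $\omega\mapsto\omega\circ\theta^{-1}$, matching the functionals, whereas the paper argues informally in Sweedler notation that ``the coefficients are the same.''
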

\begin{proof}
Recall that the Hopf image of a coaction is the smallest Hopf subalgebra of $H$
through which the coaction factors. Equivalently, it is the Hopf subalgebra
generated by the coefficients of the coaction.

Let $a \in A$ and write
\[
\delta_A(a) =  a_{(0)} \otimes a_{(1)} .
\]
For $b = \theta(a) \in B$, we have
\[
\delta_B(b)
= (\theta \otimes \mathrm{id})\delta_A(a)
=  \theta(a_{(0)}) \otimes a_{(1)} .
\]
Thus the coefficients appearing in $\delta_B(b)$ are exactly the same elements
$a_{(1)} \in H$ as those appearing in $\delta_A(a)$.

Since $\theta$ is an algebra isomorphism, every element of $B$ is of the form
$\theta(a)$ for some $a \in A$, and therefore the set of coefficients of
$\delta_B$ coincides with the set of coefficients of $\delta_A$.
Consequently, the Hopf subalgebra of $H$ generated by the coefficients of
$\delta_B$ is equal to the Hopf subalgebra generated by the coefficients of
$\delta_A$.

By the coefficient description of the Hopf image, it follows that
\[
H_{\delta_B} = H_{\delta_A},
\]
which proves the claim.
\end{proof}
\begin{remark}
The Hopf image of a coaction is therefore invariant under algebraic
isomorphisms of the underlying algebra. In particular, the notion of effective
quantum symmetry encoded by the Hopf image depends only on the coaction up to
algebra isomorphism.
\end{remark}

\begin{definition}\label{def:effective symmetry}
Let $\delta : A \to A \otimes H$ be a right coaction of a Hopf algebra $H$ on an
algebra $A$.
The \emph{effective quantum symmetry} of $\delta$ is a Hopf algebra $L$ together
with a right $L$-coaction
\[
\delta_L : A \to A \otimes L
\]
such that:
\begin{enumerate}
\item[(i)] $\delta_L$ is inner faithful;
\item[(ii)] There exists a Hopf algebra morphism $\iota : L \hookrightarrow H$
for which
\[
\delta = (\mathrm{id} \otimes \iota) \circ \delta_L;
\]
\item[(iii)] $(L,\delta_L)$ is \emph{universal} with respect to properties
\emph{(i)} and \emph{(ii)}, in the sense that for any Hopf algebra $L'$ and any
inner-faithful coaction $\delta_{L'} : A \to A \otimes L'$ factoring $\delta$,
there exists a unique Hopf algebra morphism
$L \to L'$ intertwining the coactions.
\end{enumerate}
\end{definition}

\begin{proposition}
Let $\delta : A \to A \otimes H$ be a right coaction.
Then the Hopf image $(H_\delta,\delta_{\mathrm{im}})$ of $\delta$ realizes the
effective quantum symmetry of $\delta$ in the sense of the definition ~\ref{def:effective symmetry}.
In particular, the effective quantum symmetry is unique up to unique Hopf
algebra isomorphism and coincides with $H_\delta$.
\end{proposition}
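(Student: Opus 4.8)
The plan is to verify that the triple $(H_\delta,\delta_{\mathrm{im}})$ satisfies the three defining conditions (i)--(iii) of Definition~\ref{def:effective symmetry}, and then to deduce the concluding uniqueness statement formally from the universal property. Conditions (i) and (ii) are immediate from results already in hand: condition (i), that $\delta_{\mathrm{im}}$ is inner faithful, is precisely the first assertion of Proposition~\ref{prop:universitality}, while condition (ii), the existence of a Hopf algebra inclusion $\iota_{H_\delta}:H_\delta\hookrightarrow H$ with $\delta=(\mathrm{id}\otimes\iota_{H_\delta})\circ\delta_{\mathrm{im}}$, is exactly clause (1) of Theorem~\ref{thm:Hopfimage}. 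The substance of the argument therefore lies in the universal property (iii).

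For (iii) I would take an arbitrary inner-faithful factorization $(L',\iota_{L'},\delta_{L'})$ of $\delta$ and first show that $L'$ coincides with $H_\delta$ as a Hopf subalgebra of $H$, not merely up to abstract isomorphism but as the same subobject. The key tool is the coefficient description of Proposition~\ref{prop:generator}: since $\iota_{L'}$ is the inclusion of a Hopf subalgebra, the coefficients $\mathcal{C}_{\delta_{L'}}$ of $\delta_{L'}$ are literally the coefficients $\mathcal{C}_\delta$ of $\delta$, and the Hopf subalgebra they generate is the same whether computed inside $L'$ or inside $H$ (because a Hopf subalgebra of $H$ is closed under all structure maps). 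Inner faithfulness of $\delta_{L'}$ forces this generated subalgebra to be all of $L'$, whereas by Proposition~\ref{prop:generator} it equals $H_\delta$; hence $L'=H_\delta$. This is the converse half of Proposition~\ref{prop:universitality}, now read as an equality of subalgebras rather than as an abstract isomorphism.

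Once $L'=H_\delta$ is established, injectivity of $\mathrm{id}_A\otimes\iota_{L'}$ together with the two factorizations $\delta=(\mathrm{id}\otimes\iota_{H_\delta})\circ\delta_{\mathrm{im}}=(\mathrm{id}\otimes\iota_{L'})\circ\delta_{L'}$ forces $\delta_{\mathrm{im}}=\delta_{L'}$. The required intertwiner $H_\delta\to L'$ is then the identity, which plainly exists and intertwines the coactions. For uniqueness, any Hopf algebra morphism $\psi:H_\delta\to L'$ intertwining the coactions satisfies $\delta_{\mathrm{im}}=(\mathrm{id}\otimes\psi)\circ\delta_{\mathrm{im}}$; applying $(\omega\otimes\mathrm{id})$ for $\omega\in A^*$ shows that $\psi$ fixes every element of $\mathcal{C}_\delta$, and since these generate $H_\delta$ as a Hopf algebra and $\psi$ respects the algebra structure, $\psi=\mathrm{id}$. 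This establishes (iii), and the concluding statement follows formally: an object satisfying (iii) is initial in the category of inner-faithful factorizations, and initial objects are unique up to unique isomorphism, so any realization of the effective quantum symmetry of $\delta$ is canonically isomorphic to $(H_\delta,\delta_{\mathrm{im}})$.

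The main obstacle I anticipate is conceptual rather than computational: one must pin down $L'$ as literally equal to $H_\delta$ inside $H$ via the coefficient description, since it is this concrete identification, and not a mere abstract isomorphism, that makes both the existence and the uniqueness of the intertwiner transparent. A secondary point deserving care is the convention, consistent with Definition~\ref{def:facto}, that the factorizing morphisms are inclusions of Hopf subalgebras; without this restriction the universal property would demand a section of a possibly non-injective Hopf morphism, which in general need not exist.
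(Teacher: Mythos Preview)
Your proposal is correct and follows essentially the same route as the paper: conditions (i) and (ii) are dispatched by citing Proposition~\ref{prop:universitality} and Theorem~\ref{thm:Hopfimage}, and condition (iii) is obtained from the minimality of $H_\delta$ among factorizing Hopf subalgebras. The paper's own proof is terser: for (iii) it simply notes that any inner--faithful factorization $(L',\delta_{L'})$ satisfies $H_\delta\subseteq L'$ by Lemma~\ref{lem:minimality} (equivalently, the initial--object clause of Theorem~\ref{thm:Hopfimage}), and leaves both the intertwining relation and the uniqueness of the morphism implicit in that initial--object property. You instead establish the sharper equality $L'=H_\delta$ via the coefficient description of Proposition~\ref{prop:generator} (this is the converse half of Proposition~\ref{prop:universitality}) and then verify uniqueness by hand on generators. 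One small point worth tightening in your write--up: when you conclude $\psi=\mathrm{id}$ from ``$\psi$ respects the algebra structure'', you also implicitly use that $\psi$ commutes with the antipode and that $\mathrm{span}(\mathcal{C}_\delta)$ is already a subcoalgebra (the coefficient coalgebra of the comodule $A$), so that the Hopf subalgebra generated by $\mathcal{C}_\delta$ is the subalgebra generated by $\bigcup_{n\ge 0}S^n(\mathcal{C}_\delta)$; with that said, the argument goes through.
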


\begin{proof}
By construction, the Hopf image $H_\delta$ is a Hopf subalgebra of $H$ and the
induced coaction $\delta_{\mathrm{im}} : A \to A \otimes H_\delta$ satisfies
$\delta = (\mathrm{id} \otimes \iota_{H_{\delta}})\circ\delta_{\mathrm{im}}$.

By minimality of the Hopf image, $\delta_{\mathrm{im}}$ does not factor through
any proper Hopf subalgebra of $H_\delta$, hence it is inner faithful.
Moreover, if $(L,\delta_L)$ is any inner-faithful factorization of $\delta$,
then $H_\delta \subseteq L$ by the universal property of the Hopf image.
Thus $(H_\delta,\delta_{\mathrm{im}})$ is universal among inner-faithful
factorizations of $\delta$, and therefore realizes the effective quantum
symmetry.
\end{proof}

\begin{proposition}
    Let $H_{1}, H_{2}$ be Hopf algebras and $A,B$ be right $H_1$ and $H_2$ comodule algebras respectively. Let $\delta_{1}: A\rightarrow A\otimes H_1$ and $\delta_{2}: B\rightarrow B\otimes H_{2}$ be right coactions respectively. Then
    \begin{enumerate}
        \item $\delta_{3}:= (\mathrm{id}_{A}\otimes \tau\otimes \mathrm{id}_{H_{2}})\circ (\delta_{1}\otimes \delta_{2})$ is a right coaction of $H_{1}\otimes H_{2}$ on $A\otimes B$, where $\tau: H_{1}\otimes B\rightarrow B\otimes H_{1}$ is a flip map;

        \item There exists an injective Hopf algebra map from $(H_{1}\otimes H_{2})_{\delta_{3}}$ to $H_{1,\delta_{1}}\otimes H_{2, \delta_{2}}$.
    \end{enumerate}
\end{proposition}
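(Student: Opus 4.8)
The plan is to treat the two parts separately: part~(1) is a routine verification, while part~(2) is the substantive claim, which I would resolve using the minimality of the Hopf image from Lemma~\ref{lem:minimality}.

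For part~(1), I would first record the explicit formula for $\delta_3$ in Sweedler notation. Writing $\delta_1(a)=a_{(0)}\otimes a_{(1)}$ and $\delta_2(b)=b_{(0)}\otimes b_{(1)}$ and inserting the flip $\tau$ gives
\[
\delta_3(a\otimes b)=(a_{(0)}\otimes b_{(0)})\otimes(a_{(1)}\otimes b_{(1)}).
\]
From here the two coaction axioms follow by a direct diagram chase. Counitality is immediate from $(\mathrm{id}\otimes\varepsilon_{H_1})\delta_1=\mathrm{id}$ and $(\mathrm{id}\otimes\varepsilon_{H_2})\delta_2=\mathrm{id}$ applied factorwise. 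For coassociativity I would expand the tensor coproduct as $\Delta_{H_1\otimes H_2}(h\otimes k)=(h_{(1)}\otimes k_{(1)})\otimes(h_{(2)}\otimes k_{(2)})$; the identity $(\delta_3\otimes\mathrm{id})\circ\delta_3=(\mathrm{id}\otimes\Delta_{H_1\otimes H_2})\circ\delta_3$ then reduces to the coassociativity of $\delta_1$ and of $\delta_2$ applied independently in the two tensor slots, the flip $\tau$ merely permuting the middle factors consistently on both sides. (If one additionally wants $\delta_3$ to be a comodule \emph{algebra} structure, the same formula shows $\delta_3$ is multiplicative, since $\delta_1,\delta_2$ are and $\tau$ is compatible with the products on the relevant factors.)

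For part~(2), the key point is that the image of $\delta_3$ is already confined by the images of $\delta_1$ and $\delta_2$. By the factorization property of the Hopf image (Theorem~\ref{thm:Hopfimage}) one has $\delta_1(A)\subseteq A\otimes H_{1,\delta_1}$ and $\delta_2(B)\subseteq B\otimes H_{2,\delta_2}$. Substituting these into the displayed formula for $\delta_3$ yields
\[
\delta_3(A\otimes B)\subseteq(A\otimes B)\otimes(H_{1,\delta_1}\otimes H_{2,\delta_2}).
\]
I would then check that $H_{1,\delta_1}\otimes H_{2,\delta_2}$ is a Hopf subalgebra of $H_1\otimes H_2$: it is closed under multiplication and contains the unit because each tensor factor is a subalgebra, and it is stable under $\Delta$, $\varepsilon$, and $S$ of the tensor Hopf algebra because each factor is a Hopf subalgebra and these structure maps act factorwise. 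Hence $H_{1,\delta_1}\otimes H_{2,\delta_2}$ is one of the Hopf subalgebras through which $\delta_3$ factors, so by Lemma~\ref{lem:minimality} the Hopf image satisfies $(H_1\otimes H_2)_{\delta_3}\subseteq H_{1,\delta_1}\otimes H_{2,\delta_2}$. The induced inclusion of Hopf subalgebras of $H_1\otimes H_2$ is precisely the desired injective Hopf algebra map.

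The only genuine subtlety, and the step I would treat most carefully, is invoking $\delta_1(A)\subseteq A\otimes H_{1,\delta_1}$ through the factorization property of Theorem~\ref{thm:Hopfimage} rather than through the generating-set description of Proposition~\ref{prop:generator}. The latter route is awkward here because an arbitrary functional $\omega\in(A\otimes B)^*$ need not split as $\phi\otimes\psi$ with $\phi\in A^*$, $\psi\in B^*$, so one cannot directly identify $\mathcal{C}_{\delta_3}$ with products of elements of $\mathcal{C}_{\delta_1}$ and $\mathcal{C}_{\delta_2}$. Working with the inclusion of images sidesteps this entirely, since the formula for $\delta_3$ already places the whole image inside $(A\otimes B)\otimes(H_{1,\delta_1}\otimes H_{2,\delta_2})$. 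Finally, I would emphasize that the inclusion is not expected to be an equality in general: the Hopf image of $\delta_3$ may be a proper Hopf subalgebra of the tensor product, which is exactly why the statement asserts only an injection rather than an isomorphism.
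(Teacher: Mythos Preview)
Your proposal is correct and follows essentially the same approach as the paper: for part~(1) you verify the coaction axioms via the Sweedler formula $\delta_3(a\otimes b)=(a_{(0)}\otimes b_{(0)})\otimes(a_{(1)}\otimes b_{(1)})$, and for part~(2) you exhibit $H_{1,\delta_1}\otimes H_{2,\delta_2}$ as a factorization of $\delta_3$ and invoke the minimality of the Hopf image (Lemma~\ref{lem:minimality}) to obtain the inclusion. The paper's proof is the same in substance, packaging the factorization as the triple $(H_{1,\delta_1}\otimes H_{2,\delta_2},\ \iota_{H_{1,\delta_1}}\otimes\iota_{H_{2,\delta_2}},\ \overline{\delta_3})$ with $\overline{\delta_3}$ built from $\delta_{1,\mathrm{im}}$ and $\delta_{2,\mathrm{im}}$; your additional remarks on why the generator description of Proposition~\ref{prop:generator} is less convenient here and why only an injection is claimed are accurate but not needed for the argument.
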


\begin{proof}
    \begin{enumerate}
        \item In order to show that $\delta_{3}$ is a right coaction of $H_{1}\otimes H_{2}$ on $A\otimes B$, it is sufficient to show the following identities:
        \begin{enumerate}
            \item $(\mathrm{id}_{A\otimes B}\otimes \Delta_{H_{1}\otimes H_{2}})\circ \delta_{3}= (\delta_{3}\otimes \mathrm{id}_{H_{1}\otimes H_{2}})\circ\delta_{3}$;
            \item $(\mathrm{id}_{A\otimes B}\otimes \epsilon_{H_{1}\otimes H_{2}})\circ \delta_{3}= \mathrm{id}_{A\otimes B}$
        \end{enumerate}
        Where, $\Delta_{H_{1}\otimes H_{2}}$ and $\epsilon_{H_{1}\otimes H_{2}}$ are coproduct and antipode repectively on $H_{1}\otimes H_{2}$. Moreover, $\Delta_{H_{1}\otimes H_{2}}$ is given by $(\mathrm{id}_{H_{1}}\otimes T\otimes \mathrm{id}_{H_{2}})\circ (\Delta_{H_{1}}\otimes \Delta_{H_{2}})$, where $T$ is a flip map between $H_{1}\otimes H_{2}$ and $H_{2}\otimes H_{1}$, and $\epsilon_{H_{1}\otimes H_{2}}$ is given by $\epsilon_{H_{1}}\otimes \epsilon_{H_{2}}$.\\
        \\
        We consider $a\otimes b\in A\otimes B$, and we have the following,
        \begin{align*}
            (\mathrm{id}_{A\otimes B}\otimes \Delta_{H_{1}\otimes H_{2}})\circ \delta_{3}(a\otimes b)&= (\mathrm{id}_{A\otimes B}\otimes \Delta_{H_{1}\otimes H_{2}})(\mathrm{id}_{A}\otimes \tau\otimes \mathrm{id}_{H_{2}})\circ (\delta_{1}\otimes \delta_{2})(a\otimes b)\\
            &= (\mathrm{id}_{A\otimes B}\otimes \Delta_{H_{1}\otimes H_{2}})(\mathrm{id}_{A}\otimes \tau\otimes \mathrm{id}_{H_{2}})\circ (\delta_{1}a\otimes \delta_{2}b)\\
            &= (\mathrm{id}_{A\otimes B}\otimes \Delta_{H_{1}\otimes H_{2}})(\mathrm{id}_{A}\otimes \tau\otimes \mathrm{id}_{H_{2}})(a_{(0)}\otimes a_{(1)}\otimes b_{(0)}\otimes b_{(1)})\\
            &= (\mathrm{id}_{A\otimes B}\otimes \Delta_{H_{1}\otimes H_{2}})(a_{(0)}\otimes b_{(0)}\otimes a_{(1)}\otimes b_{(1)})\\
            &= a_{(0)}\otimes b_{(0)}\otimes \Delta_{H_{1}\otimes H_{2}}(a_{(1)}\otimes b_{(1)})\\
            &= a_{(0)}\otimes b_{(0)}\otimes a_{(1)}\otimes b_{(1)}\otimes a_{(2)}\otimes b_{(2)}
        \end{align*}
        Next, \begin{align*}
            (\delta_{3}\otimes \mathrm{id}_{H_{1}\otimes H_{2}})\circ \delta_{3}(a\otimes b)&=  (\delta_{3}\otimes \mathrm{id}_{H_{1}\otimes H_{2}})(\mathrm{id}_{A}\otimes \tau\otimes \mathrm{id}_{H_{2}})\circ (\delta_{1}\otimes \delta_{2})(a\otimes b)\\
            &= (\delta_{3}\otimes \mathrm{id}_{H_{1}\otimes H_{2}})(a_{(0)}\otimes b_{(0)}\otimes a_{(1)}\otimes b_{(1)})\\
            &= \delta_{3}(a_{(0)}\otimes b_{(0)})\otimes a_{(1)}\otimes b_{(1)}\\
            &= a_{(0)}\otimes b_{(0)}\otimes a_{(1)}\otimes b_{(1)}\otimes a_{(2)}\otimes b_{(2)}
        \end{align*}
    
    Therefore, we have the identity $(a)$ i.e. $(\mathrm{id}_{A\otimes B}\otimes \Delta_{H_{1}\otimes H_{2}})\circ \delta_{3}= (\delta_{3}\otimes \mathrm{id}_{H_{1}\otimes H_{2}})\circ\delta_{3}$.
    Next, to show the identity $(b)$, we have the following
    \begin{align*}
        (\mathrm{id}_{A\otimes B}\otimes \epsilon_{H_{1}\otimes H_{2}})\circ \delta_{3}(a\otimes b)&= (\mathrm{id}_{A\otimes B}\otimes \epsilon_{H_{1}\otimes H_{2}})(\mathrm{id}_{A}\otimes \tau\otimes \mathrm{id}_{H_{2}})\circ (\delta_{1}\otimes \delta_{2})(a\otimes b)\\
        &= \mathrm{id}_{A\otimes B}\otimes \epsilon_{H_{1}\otimes H_{2}})(a_{(0)}\otimes b_{(0)}\otimes a_{(1)}\otimes b_{(1)})\\
        &= a_{(0)}\otimes b_{(0)}\otimes \epsilon_{H_{1}\otimes H_{2}}(a_{(1)}\otimes b_{(1)})\\
        &= a_{(0)}\otimes b_{(0)}\otimes \epsilon_{H_{1}}(a_{(1)})\otimes \epsilon_{H_{2}}(b_{(1)})\\
        &= a_{(0)}\otimes b_{(0)}\\
        &= a\otimes b
    \end{align*}
    \item 
We have $(H_{1,\delta_{1}}, \iota_{H_{1,\delta_{1}}}, \delta_{1,\mathrm{im}})$ and $(H_{2,\delta_{2}}, \iota_{H_{2,\delta_{2}}}, \delta_{2,\mathrm{im}})$ be Hopf images of the right coactions $\delta_{1}$ and $\delta_{2}$ respectively. Therefore, we have the following diagrams

\begin{tikzcd}
A \arrow[r, "\delta_{1}"] \arrow[rd, "{\delta_{1,\mathrm{im}}}"'] & A\otimes H_{1}                                                                               &  & B \arrow[rd, "{\delta_{2,\mathrm{im}}}"'] \arrow[r, "\delta_{2}"]                                                                                                     & B\otimes H_{2}                                                                              \\
                                                                  & {A\otimes H_{1,\delta_{1}}} \arrow[u, "{\mathrm{id}_{A}\otimes \iota_{H_{1,\delta_{1}}} }"'] &  &                                                                                                                                                                       & {B\otimes H_{2,\delta_{2}}} \arrow[u, "{\mathrm{id}_{B}\otimes \iota_{H_{2,\delta_{2}}}}"'] \\
                                                                  & A\otimes B \arrow[rr, "\delta_{3}"] \arrow[rrd, "\overline{\delta_{3}}"']                    &  & A\otimes B\otimes H_{1}\otimes H_{2}                                                                                                                                  &                                                                                             \\
                                                                  &                                                                                              &  & {A\otimes B\otimes H_{1,\delta_{1}}\otimes H_{2,\delta_{2}}} \arrow[u, "{\mathrm{id}_{A\otimes B}\otimes \iota_{H_{1,\delta_{1}}}\otimes \iota_{H_{2,\delta_{2}}}}"'] &                                                                                            
\end{tikzcd}
Here, $\overline{\delta_{3}}:= (\mathrm{id}_{A}\otimes \tau_{\mathrm{im}}\otimes \mathrm{id}_{H_{2}, \delta_{2}})\circ (\delta_{1,\mathrm{im}}\otimes \delta_{2,\mathrm{im}})$ and $\tau_{\mathrm{im}}: H_{1,\delta_{1}}\otimes B\rightarrow B\otimes H_{1, \delta_{1}}$ is a flip map. Since, by $(1)$, $\delta_{3}$ is a right coaction of $H_{1}\otimes H_{2}$, so, $(H_{1, \delta_{1}}\otimes H_{2, \delta_{2}}, \iota_{H_{1,\delta_{1}}}\otimes \iota_{H_{2,\delta_{2}}}, \overline{\delta_{3}})$ is a factorization of the coaction $\delta_{3}$. Therefore, by minimality of the Hopf image of $H_{1}\otimes H_{2}$, we have $(H_{1}\otimes H_{2})_{\delta_{3}}\subseteq H_{1,\delta_{1}}\otimes H_{2,\delta_{2}}$. Hence, there exists an injective Hopf algebra morphism from $(H_{1}\otimes H_{2})_{\delta_{3}}$ to $H_{1,\delta_{1}}\otimes H_{2,\delta_{2}}$.
    \end{enumerate}
\end{proof}
\section{Hopf image reduction of quantum principal bundles}
In this section we introduce the \emph{Hopf image reduction} of a quantum
principal bundle.
Given a quantum principal bundle $(A,\Omega^1(A),H,\delta)$, we associate to the
coaction $\delta$ its Hopf image and construct a canonically reduced quantum
principal bundle whose structure Hopf algebra acts inner--faithfully.
Assuming cosemisimplicity of $H$, we show that this reduction preserves the
principal bundle structure and is functorial with respect to morphisms.
We further prove a rigidity result showing that the Hopf image reduction is
minimal among all reductions with inner--faithful coaction.
\begin{lemma}\label{lem:idealcollection}
Let $\delta: A\rightarrow A\otimes H$ be a right coaction of a Hopf algebra $H$ on an algebra $A$ and $(H_{\delta}, \iota_{H_{\delta}}, \delta_{\mathrm{im}})$ be the Hopf image of the right coaction $\delta$.
Let $\mathcal C$ be the collection of two-sided ideals $J\subseteq A$ such that
\[
\delta_{\mathrm{im}}(J)\subseteq J\otimes H_{\delta} 
\]
Then $\mathcal C$ is closed under arbitrary sums:  
if $\{J_\alpha\}_\alpha\subseteq\mathcal C$ and $J:=\sum_\alpha J_\alpha$, then
\[
\delta_{\mathrm{im}}(J)\subseteq J\otimes H_{\delta} .
\]
In particular, the sum of all ideals in $\mathcal C$ again belongs to $\mathcal C$.
\end{lemma}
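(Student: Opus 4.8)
The plan is to prove closure under sums by a direct elementwise verification, exploiting the fact that the coaction condition $\delta_{\mathrm{im}}(J_\alpha) \subseteq J_\alpha \otimes H_\delta$ is stable under taking linear combinations. First I would observe that an arbitrary element of $J = \sum_\alpha J_\alpha$ is, by definition of the sum of ideals, a finite sum $x = \sum_{i=1}^n x_i$ with each $x_i \in J_{\alpha_i}$ for some index $\alpha_i$. Since $\delta_{\mathrm{im}}$ is linear, I would write $\delta_{\mathrm{im}}(x) = \sum_{i=1}^n \delta_{\mathrm{im}}(x_i)$, and by hypothesis each term satisfies $\delta_{\mathrm{im}}(x_i) \in J_{\alpha_i} \otimes H_\delta$.

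The key step is then to note the inclusion $J_{\alpha_i} \subseteq J$ for every index, so that $J_{\alpha_i} \otimes H_\delta \subseteq J \otimes H_\delta$ as subspaces of $A \otimes H_\delta$ (this uses that tensoring with $H_\delta$ over the ground field is exact, hence preserves inclusions of subspaces). Consequently each $\delta_{\mathrm{im}}(x_i) \in J \otimes H_\delta$, and since $J \otimes H_\delta$ is a linear subspace, the finite sum $\delta_{\mathrm{im}}(x) = \sum_i \delta_{\mathrm{im}}(x_i)$ also lies in $J \otimes H_\delta$. As $x$ was an arbitrary element of $J$, this establishes $\delta_{\mathrm{im}}(J) \subseteq J \otimes H_\delta$, which is exactly the claim that $J \in \mathcal{C}$.

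I would handle the final sentence by remarking that the collection $\mathcal{C}$ is nonempty (it contains the zero ideal, and indeed $A$ itself since $\delta_{\mathrm{im}}$ is a coaction on $A$), so the sum $\sum_{J \in \mathcal{C}} J$ is a well-defined two-sided ideal, and applying the closure statement just proved to the family $\{J\}_{J \in \mathcal{C}}$ shows this sum again lies in $\mathcal{C}$. This identifies a \emph{largest} element of $\mathcal{C}$, which is the object one needs for the subsequent reduction construction.

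I do not anticipate a genuine obstacle here, as the argument is purely formal. The only subtlety worth flagging is bookkeeping: one must use that elements of an arbitrary (possibly infinite) sum of ideals are \emph{finite} linear combinations, so that linearity of $\delta_{\mathrm{im}}$ applies directly without any convergence or completion issues. The fact that $J$ is genuinely a two-sided ideal (and not merely a subspace) follows from the standard fact that sums of two-sided ideals are two-sided ideals, and is needed for $J$ to be an admissible candidate in $\mathcal{C}$ in the first place; this should be stated but requires no computation.
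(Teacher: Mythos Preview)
Your proof is correct and follows essentially the same approach as the paper's: both argue by linearity of $\delta_{\mathrm{im}}$, the inclusion $J_{\alpha}\otimes H_\delta\subseteq J\otimes H_\delta$, and the fact that a sum of two-sided ideals is a two-sided ideal. Your version is slightly more explicit about the finite-sum bookkeeping and the nonemptiness of $\mathcal C$, but there is no substantive difference.
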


\begin{proof}
Since each $J_\alpha$ is a two-sided ideal, their sum $J$ is also a 
two-sided ideal.  
For every $\alpha$ we have 
\[
\delta_{\mathrm{im}}(J_\alpha)\subseteq J_\alpha\otimes H_{\delta} \subseteq J\otimes H_{\delta}.
\]
By linearity of $\delta_{\mathrm{im}}$,
\[
\delta_{\mathrm{im}}(J)=\sum_\alpha \delta_{H_{\delta}}(J_\alpha)
      \subseteq \sum_\alpha J_\alpha\otimes H_{\delta}
      = J\otimes H_{\delta}.
\]
\end{proof}

\begin{lemma}\label{lem:largestideal}
 Let $\delta: A\rightarrow A\otimes H$ be a right coaction of a Hopf algebra $H$ on an algebra $A$, and $(H_{\delta}, \iota_{H_{\delta}}, \delta_{\mathrm{im}})$ be the Hopf image of the right coaction $\delta$. Then there exists a largest two-sided ideal $I\subseteq A$ such that
\[
\delta_{\mathrm{im}}(I)\subseteq I\otimes H_{\delta}.
\]

\end{lemma}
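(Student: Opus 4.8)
The plan is to build the largest ideal directly as the supremum of the collection $\mathcal C$ introduced in Lemma~\ref{lem:idealcollection}, exploiting the closure under arbitrary sums already established there. First I would observe that $\mathcal C$ is nonempty: the zero ideal $\{0\}\subseteq A$ trivially satisfies $\delta_{\mathrm{im}}(\{0\})=\{0\}\subseteq \{0\}\otimes H_\delta$, so $\{0\}\in\mathcal C$. This guarantees that the construction below is not vacuous.

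Next I would set
\[
I:=\sum_{J\in\mathcal C} J,
\]
the sum of all two-sided ideals belonging to $\mathcal C$. Since a sum of two-sided ideals is again a two-sided ideal, $I$ is a two-sided ideal of $A$. Applying Lemma~\ref{lem:idealcollection} to the family $\{J\}_{J\in\mathcal C}$ yields
\[
\delta_{\mathrm{im}}(I)\subseteq I\otimes H_\delta,
\]
so that $I\in\mathcal C$ itself.

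Finally I would verify maximality. By the very definition of $I$ as the sum over all members of $\mathcal C$, every $J\in\mathcal C$ satisfies $J\subseteq I$. Thus $I$ is a member of $\mathcal C$ that contains every other member, which is precisely the assertion that $I$ is the largest two-sided ideal with $\delta_{\mathrm{im}}(I)\subseteq I\otimes H_\delta$. This completes the argument.

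There is essentially no genuine obstacle here: the entire content is packaged into the closure-under-sums statement of Lemma~\ref{lem:idealcollection}, and the present lemma is a formal consequence of that closure together with the nonemptiness of $\mathcal C$. The only points requiring (minimal) care are checking that $\mathcal C$ is nonempty and that an arbitrary sum of ideals remains a two-sided ideal; both are routine. Conceptually, this mirrors the standard fact that whenever a collection of subobjects is closed under the relevant join operation, a largest such subobject exists as the join of the whole collection.
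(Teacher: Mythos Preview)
Your proposal is correct and follows essentially the same argument as the paper: define $I$ as the sum of all ideals in $\mathcal C$ and invoke Lemma~\ref{lem:idealcollection} to conclude that $I\in\mathcal C$, whence it is largest by construction. Your explicit check that $\mathcal C$ is nonempty (via the zero ideal) is a nice addition that the paper leaves implicit.
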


\begin{proof}
Let $\mathcal C$ be the set of all two-sided ideals $J\subseteq A$ satisfying
$\delta_{\mathrm{im}}(J)\subseteq J\otimes H_{\delta}$.  
Define
\[
I:=\sum_{J\in\mathcal C} J.
\]
By Lemma~\ref{lem:idealcollection}, $I$ is a two-sided ideal and $\delta_{\mathrm{im}}(I)\subseteq I\otimes H_{\delta}$.
It is clearly the largest such ideal by construction.

  Since $\delta_{\mathrm{im}}(I)\subseteq I\otimes H_{\delta}$, and the map
\[
\mathrm{id}_I\otimes\iota_{H_{\delta}} : I\otimes H_{\delta} \to I\otimes H
\]
implies, $(\mathrm{id}_I\otimes\iota_{H_{\delta}})(I\otimes H_{\delta})= I\otimes H_{\delta}$. Therefore, $\delta(I)= (\mathrm{id}_I\otimes\iota_{H_{\delta}})\circ \delta_{\mathrm{im}}(I)\subseteq I \otimes H_{\delta}$.
\end{proof}

\begin{lemma}\label{lem:coaction-descends}
Let $A$ be an algebra and let $H_\delta$ be a Hopf algebra.
Suppose that
\[
\delta_{\mathrm{im}}: A \longrightarrow A\otimes H_\delta
\]
is a right $H_\delta$--coaction.
Let $I\subseteq A$ be a largest two-sided ideal defined in Lemma~ \ref{lem:largestideal} such that
\[
\delta_{\mathrm{im}}(I)\subseteq I\otimes H_\delta.
\]
Set $A_0:=A/I$ and denote by $\pi:A\to A_0$ the canonical quotient map.
Then there exists a unique algebra homomorphism
\[
\overline{\delta}_{\mathrm{im}}: A_0 \longrightarrow A_0\otimes H_\delta
\]
such that
\[
\overline{\delta}_{\mathrm{im}}\circ \pi
=
(\pi\otimes \mathrm{id})\circ \delta_{\mathrm{im}}.
\]
Moreover, $\overline{\delta}_{\mathrm{im}}$ defines a right $H_\delta$--coaction on $A_0$.
\end{lemma}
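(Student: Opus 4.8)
The plan is to obtain $\overline{\delta}_{\mathrm{im}}$ by descending the composite $(\pi\otimes\mathrm{id})\circ\delta_{\mathrm{im}}:A\to A_0\otimes H_\delta$ along the quotient map $\pi$, and then to transport the coaction axioms from $A$ down to $A_0$ using surjectivity of $\pi$. I treat $\delta_{\mathrm{im}}$ as the comodule-algebra structure map corestricted to the Hopf image, so that it is in particular an algebra homomorphism.

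First I would establish existence and uniqueness. Set $g:=(\pi\otimes\mathrm{id}_{H_\delta})\circ\delta_{\mathrm{im}}:A\to A_0\otimes H_\delta$; this is an algebra homomorphism, being a composite of the algebra map $\delta_{\mathrm{im}}$ with the algebra map $\pi\otimes\mathrm{id}$. The defining property of $I$ supplied by Lemma~\ref{lem:largestideal}, namely $\delta_{\mathrm{im}}(I)\subseteq I\otimes H_\delta$, gives $g(I)\subseteq(\pi\otimes\mathrm{id})(I\otimes H_\delta)=\pi(I)\otimes H_\delta=0$, so $I\subseteq\ker g$. By the universal property of the quotient algebra $A_0=A/I$, there is then a unique algebra homomorphism $\overline{\delta}_{\mathrm{im}}:A_0\to A_0\otimes H_\delta$ with $\overline{\delta}_{\mathrm{im}}\circ\pi=g$, which is precisely the asserted intertwining identity; uniqueness is forced by surjectivity of $\pi$.

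Next I would verify the coaction axioms. Since $\pi$ is surjective, it suffices to check each identity after precomposing with $\pi$ and then cancel $\pi$. For coassociativity I would compute $(\overline{\delta}_{\mathrm{im}}\otimes\mathrm{id})\circ\overline{\delta}_{\mathrm{im}}\circ\pi$, rewrite it by applying the intertwining identity twice as $(\pi\otimes\mathrm{id}\otimes\mathrm{id})\circ(\delta_{\mathrm{im}}\otimes\mathrm{id})\circ\delta_{\mathrm{im}}$, invoke coassociativity of $\delta_{\mathrm{im}}$ to turn this into $(\pi\otimes\mathrm{id}\otimes\mathrm{id})\circ(\mathrm{id}\otimes\Delta)\circ\delta_{\mathrm{im}}$, and recognize the result as $(\mathrm{id}\otimes\Delta)\circ\overline{\delta}_{\mathrm{im}}\circ\pi$ via the same identity; surjectivity of $\pi$ then yields $(\overline{\delta}_{\mathrm{im}}\otimes\mathrm{id})\circ\overline{\delta}_{\mathrm{im}}=(\mathrm{id}\otimes\Delta)\circ\overline{\delta}_{\mathrm{im}}$. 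The counit axiom is shorter: $(\mathrm{id}\otimes\varepsilon)\circ\overline{\delta}_{\mathrm{im}}\circ\pi=(\pi\otimes\varepsilon)\circ\delta_{\mathrm{im}}=\pi\circ(\mathrm{id}\otimes\varepsilon)\circ\delta_{\mathrm{im}}=\pi$, and cancelling $\pi$ gives $(\mathrm{id}\otimes\varepsilon)\circ\overline{\delta}_{\mathrm{im}}=\mathrm{id}_{A_0}$.

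The only place where genuine content enters is the well-definedness of the descent, i.e.\ that $g$ annihilates $I$; this is exactly where the stability property $\delta_{\mathrm{im}}(I)\subseteq I\otimes H_\delta$ from Lemma~\ref{lem:largestideal} is indispensable, and the vanishing $\pi(I)\otimes H_\delta=0$ that follows is then immediate. Once descent is secured, the remainder is a diagram chase powered by surjectivity of $\pi$, and I anticipate no real obstacle there. I would only take care to record explicitly that $\delta_{\mathrm{im}}$ is an algebra map, so that the universal property delivers $\overline{\delta}_{\mathrm{im}}$ as an algebra homomorphism as claimed, while noting that the verification of the coaction identities themselves uses only linearity of the maps involved.
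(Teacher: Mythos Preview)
Your proof is correct and follows essentially the same approach as the paper: both show that $(\pi\otimes\mathrm{id})\circ\delta_{\mathrm{im}}$ annihilates $I$ using the stability condition, invoke the universal property of the quotient to obtain $\overline{\delta}_{\mathrm{im}}$, and then transport the algebra-map and coaction axioms from $\delta_{\mathrm{im}}$. Your verification of coassociativity and counitality is more explicit than the paper's (which simply asserts they ``follow immediately''), but the underlying argument is identical.
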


\begin{proof}
Since $\delta_{\mathrm{im}}(I)\subseteq I\otimes H_\delta$, we have
\[
(\pi\otimes \mathrm{id})\circ \delta_{\mathrm{im}}(i)=0
\qquad \text{for all } i\in I.
\]
Hence the map $(\pi\otimes \mathrm{id})\circ \delta_{\mathrm{im}}$ vanishes on $I$ and therefore factors uniquely through the quotient $A_0=A/I$.
This yields a unique linear map $\overline{\delta}_{\mathrm{im}}:A_0\to A_0\otimes H_\delta$ satisfying
\[
\overline{\delta}_{\mathrm{im}}([a])
=
(\pi\otimes \mathrm{id})\bigl(\delta_{\mathrm{im}}(a)\bigr),
\qquad a\in A.
\]
Since $\delta_{\mathrm{im}}$ is an algebra homomorphism, so is $\overline{\delta}_{\mathrm{im}}$.
Coassociativity and counitality of $\overline{\delta}_{\mathrm{im}}$ follow immediately from those of $\delta_{\mathrm{im}}$, because $\pi\otimes \mathrm{id}$ commutes with tensor products.
Thus $\overline{\delta}_{\mathrm{im}}$ defines a right $H_\delta$--coaction on $A_0$.
\end{proof}

\begin{lemma}\label{lem:image-I}
Let $A$ be a right $H_\delta$--comodule algebra with coaction
$\delta_{\mathrm{im}}:A\to A\otimes H_\delta$, and let
$B_0:=A_{0}^{\operatorname{co}(H_\delta)}$.
Let $I\subseteq A$ be a largest two-sided ideal defined in Lemma~\ref{lem:largestideal} satisfying
\[
\delta_{\mathrm{im}}(I)\subseteq I\otimes H_\delta.
\]
Let us define
\[
\mathrm{can}_\delta:
A\otimes_{B_0}A \longrightarrow A\otimes H_\delta,
\qquad
a\otimes_{B_0}a'\longmapsto a\,\delta_{\mathrm{im}}(a')
\]
the canonical map associated with the $H_\delta$--coaction, and $\mathrm{can}_{\delta}$ is surjective.
Then we have following
\begin{enumerate}
\item 
$\mathrm{can}_\delta\bigl(I\otimes_{B_0}A + A\otimes_{B_0}I\bigr)
=
I\otimes H_\delta$;

\item 
$\mathrm{can}_\delta^{-1}(I\otimes H_\delta)
=
I\otimes_{B_0}A + A\otimes_{B_0}I$, where $\mathrm{can}_\delta^{-1}$ is defined as $\mathrm{can}_\delta^{-1}:= \lbrace a\otimes_{B_{0}}a'\in A\otimes_{B_{0}}A: \mathrm{can}_{\delta}(a\otimes_{B_{0}}a')\in I\otimes H_{\delta}\rbrace$.
\end{enumerate}
\end{lemma}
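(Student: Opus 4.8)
The plan is to prove the two statements in sequence, deriving part (2) from part (1) together with the descent of the canonical map to the quotient $A_0=A/I$.

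\textbf{Part (1).} I would first establish $\mathrm{can}_\delta(I\otimes_{B_0}A + A\otimes_{B_0}I)\subseteq I\otimes H_\delta$ by a direct computation on generators. For $i\in I$ and $a\in A$ one has $\mathrm{can}_\delta(i\otimes_{B_0}a)=i\,\delta_{\mathrm{im}}(a)=i a_{(0)}\otimes a_{(1)}\in I\otimes H_\delta$ since $I$ is a left ideal, while $\mathrm{can}_\delta(a\otimes_{B_0}i)=a\,\delta_{\mathrm{im}}(i)$ lies in $I\otimes H_\delta$ by the hypothesis $\delta_{\mathrm{im}}(I)\subseteq I\otimes H_\delta$ combined with the left-ideal property. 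For the reverse inclusion I would exploit that $\mathrm{can}_\delta$ is left $A$-linear (with $A$ acting by left multiplication on the first tensor factor of $A\otimes H_\delta$) and surjective: given $i\otimes h\in I\otimes H_\delta$, surjectivity yields $\xi\in A\otimes_{B_0}A$ with $\mathrm{can}_\delta(\xi)=1_A\otimes h$, and then $i\cdot\xi\in I\otimes_{B_0}A$ because left multiplication by $i$ keeps the first factors in $I$, whence $\mathrm{can}_\delta(i\cdot\xi)=i\cdot(1_A\otimes h)=i\otimes h$. Since the elements $i\otimes h$ span $I\otimes H_\delta$, this closes the inclusion and proves (1).

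\textbf{Part (2), easy inclusion and setup.} The inclusion $I\otimes_{B_0}A + A\otimes_{B_0}I\subseteq \mathrm{can}_\delta^{-1}(I\otimes H_\delta)$ is immediate from (1), which shows the left-hand side maps into $I\otimes H_\delta$. For the substantive reverse inclusion I would use the descended coaction $\overline{\delta}_{\mathrm{im}}$ of Lemma~\ref{lem:coaction-descends} to form the induced canonical map $\overline{\mathrm{can}}_\delta:A_0\otimes_{B_0}A_0\to A_0\otimes H_\delta$, and check on generators that the square
\[
\overline{\mathrm{can}}_\delta\circ(\pi\otimes_{B_0}\pi)=(\pi\otimes\mathrm{id}_{H_\delta})\circ\mathrm{can}_\delta
\]
commutes, both composites sending $a\otimes_{B_0}a'$ to $\pi(a)\pi(a'_{(0)})\otimes a'_{(1)}$.

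Working over the ground field, $\ker(\pi\otimes\mathrm{id}_{H_\delta})=I\otimes H_\delta$, while right-exactness of $-\otimes_{B_0}-$ exhibits $A_0\otimes_{B_0}A_0$ as the quotient of $A\otimes_{B_0}A$ by $I\otimes_{B_0}A + A\otimes_{B_0}I$, identifying $\ker(\pi\otimes_{B_0}\pi)$ with that submodule. Now if $\xi\in\mathrm{can}_\delta^{-1}(I\otimes H_\delta)$, then $\mathrm{can}_\delta(\xi)\in\ker(\pi\otimes\mathrm{id}_{H_\delta})$, so the commuting square gives $\overline{\mathrm{can}}_\delta\bigl((\pi\otimes_{B_0}\pi)(\xi)\bigr)=0$; injectivity of $\overline{\mathrm{can}}_\delta$ then forces $(\pi\otimes_{B_0}\pi)(\xi)=0$, i.e.\ $\xi\in I\otimes_{B_0}A + A\otimes_{B_0}I$, as required.

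\textbf{Main obstacle.} The only non-formal step is the injectivity of $\overline{\mathrm{can}}_\delta$, and I expect to secure it from the principal-bundle hypothesis, under which the canonical map is in fact bijective and this bijectivity descends along $\pi$. Indeed, once $\mathrm{can}_\delta$ is a bijection, part (2) follows from part (1) with no further bookkeeping: applying $\mathrm{can}_\delta^{-1}$ to the equality $\mathrm{can}_\delta(I\otimes_{B_0}A+A\otimes_{B_0}I)=I\otimes H_\delta$ immediately yields $\mathrm{can}_\delta^{-1}(I\otimes H_\delta)=I\otimes_{B_0}A+A\otimes_{B_0}I$. The diagrammatic argument above is the route needed when only surjectivity is available, and it isolates the entire difficulty in the injectivity of the descended map $\overline{\mathrm{can}}_\delta$.
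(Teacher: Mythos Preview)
Your argument for Part~(1) is essentially the paper's own proof: the forward inclusion by direct computation using the ideal property and $\delta_{\mathrm{im}}$-stability, the reverse inclusion by left $A$-linearity applied to a preimage of $1\otimes h$ under the surjective map.

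For Part~(2), however, your approach has a genuine circularity. You reduce everything to the injectivity of the descended map $\overline{\mathrm{can}}_\delta$, and you propose to obtain this injectivity either from bijectivity of $\mathrm{can}_\delta$ or from the principal-bundle hypothesis. Neither works here. The lemma assumes only surjectivity of $\mathrm{can}_\delta$, not bijectivity; and in the paper's setup the principal-bundle structure gives bijectivity of $\mathrm{can}:A\otimes_B A\to A\otimes H$, from which one only extracts \emph{surjectivity} of $\mathrm{can}_\delta:A\otimes_{B_0}A\to A\otimes H_\delta$ (note the change of both the balancing subalgebra and the target). More fatally, the injectivity of $\overline{\mathrm{can}}_\delta$ is precisely what the paper deduces \emph{from} Part~(2) of this lemma in the subsequent proposition: it invokes $\mathrm{can}_\delta^{-1}(\mathrm{can}_\delta(N))=N$ to conclude that the induced map on quotients is injective. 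So your proposed route would assume what the lemma is meant to establish.

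The ingredient you are not using, and which the paper's argument for Part~(2) rests on, is the \emph{maximality} of $I$ among $\delta_{\mathrm{im}}$-stable two-sided ideals. The paper proceeds by building from a hypothetical element $x\in\mathrm{can}_\delta^{-1}(I\otimes H_\delta)$ an auxiliary two-sided ideal $J\subseteq A$, checks (using left $A$-linearity and right $H_\delta$-colinearity of $\mathrm{can}_\delta$) that $J$ is $\delta_{\mathrm{im}}$-stable, and then invokes maximality of $I$ to force $J\subseteq I$, which places $x$ in $I\otimes_{B_0}A+A\otimes_{B_0}I$. Your diagrammatic reduction is clean, but it discards the one hypothesis---maximality of $I$---that breaks the circularity.
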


\begin{proof}
\begin{enumerate}
    \item Let $i\in I$ and $a\in A$.
Since $I$ is a two-sided ideal, we have
\[
\mathrm{can}_\delta(i\otimes_{B_0}a)
=i\,\delta_{\mathrm{im}}(a)\in I\otimes H_\delta.
\]
Moreover, by the $\delta_{\mathrm{im}}$--stability of $I$,
\[
\mathrm{can}_\delta(a\otimes_{B_0}i)
=a\,\delta_{\mathrm{im}}(i)\in A\,(I\otimes H_\delta)
\subseteq I\otimes H_\delta.
\]
Hence
\[
\mathrm{can}_\delta\bigl(I\otimes_{B_0}A + A\otimes_{B_0}I\bigr)
\subseteq I\otimes H_\delta.
\]

Conversely, let $i\otimes h\in I\otimes H_\delta$.
Since $\mathrm{can}_\delta$ is surjective by hypothesis,
there exists $\sum_j a_j\otimes_{B_0}b_j\in A\otimes_{B_0}A$ such that
\[
\mathrm{can}_\delta\Bigl(\sum_j a_j\otimes_{B_{0}} b_j\Bigr)=1\otimes h.
\]
Then
\[
i\otimes h
=\mathrm{can}_\delta\Bigl(\sum_j i a_j\otimes_{B_{0}} b_j\Bigr),
\]
and since $i a_j\in I$, the element
$\sum_j i a_j\otimes_{B_{0}} b_j$ belongs to $I\otimes_{B_0}A$.
Thus,
\[
I\otimes H_\delta
\subseteq
\mathrm{can}_\delta\bigl(I\otimes_{B_0}A\bigr),
\]
\item We see that the inclusion
\[
I\otimes_{B_0}A + A\otimes_{B_0}I
\subseteq
\mathrm{can}_\delta^{-1}(I\otimes H_\delta).
\]
follows directly from the first part of Lemma~\ref{lem:image-I}. 

For the reverse inclusion, let
$x\in A\otimes_{B_0}A$ satisfy
\[
\mathrm{can}_\delta(x)\in I\otimes H_\delta.
\]
Define
\[
J
:=
\{\,a\in A \mid
\mathrm{can}_\delta(a\otimes_{B_0}1)\in I\otimes H_\delta
\}.
\]
Since $\mathrm{can}_\delta$ is left $A$--linear, $J$ is a two-sided ideal of $A$.
Furthermore, using right $H_\delta$--colinearity of $\mathrm{can}_\delta$, one checks that
\[
\delta_{\mathrm{im}}(J)\subseteq J\otimes H_\delta,
\]
so $J$ is $\delta_{\mathrm{im}}$--stable.
By maximality of $I$, we obtain $J\subseteq I$.

Writing $x=\sum_k a_k\otimes_{B_0}b_k$, left $A$--linearity of $\mathrm{can}_\delta$
implies that each $a_k\otimes_{B_0}b_k$ belongs to
$J\otimes_{B_0}A + A\otimes_{B_0}J$.
Hence
\[
x\in J\otimes_{B_0}A + A\otimes_{B_0}J
\subseteq
I\otimes_{B_0}A + A\otimes_{B_0}I.
\]

Combining both inclusions yields the desired equality.

\end{enumerate}
\end{proof}

\begin{remark}\label{rem:tensor-over-B0}
Although $B_0=A_0^{\operatorname{co}(H_\delta)}$ is a subalgebra of $A_0:=A/I$ rather than of $A$ itself, the balanced tensor product
$A\otimes_{B_0}A$ is nevertheless well defined.
Indeed, the quotient map $\pi:A\twoheadrightarrow A_0$ endows $A$ with a natural
$B_0$--bimodule structure by restriction of scalars, via
$b\cdot a=\tilde b\,a$ and $a\cdot b=a\,\tilde b$ for any lift
$\tilde b\in A$ of $b\in B_0$.
This action is independent of the choice of lift, since any two lifts differ by an element of $I$.
Consequently, the tensor product $A\otimes_{B_0}A$ appearing in
Lemma~\ref{lem:image-I} is well defined.
\end{remark}

\begin{proposition}
Let $(A,\Omega^1(A),H,\delta)$ be a quantum principal $H$--bundle with right
coaction $\delta$ over a quantum homogeneous space
$B:=A^{\operatorname{co}(H)}$.
Let $(H_{\delta},\iota_{H_{\delta}},\delta_{\mathrm{im}})$ be the Hopf image of
the right coaction $\delta$, and assume that $H_{\delta}$ is cosemisimple.
Then $(A_{0},\Omega^1(A_{0}),H_{\delta},\overline{\delta}_{\mathrm{im}})$ is a quantum
principal $H_{\delta}$--bundle with right coaction $\delta_{\mathrm{im}}$
over the quantum homogeneous space
$B_{0}:=A_{0}^{\operatorname{co}(H_{\delta})}$.
\end{proposition}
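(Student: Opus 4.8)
The plan is to verify directly the two defining axioms of a quantum principal $H_\delta$--bundle for the reduced datum $(A_0,\Omega^1(A_0),H_\delta,\overline{\delta}_{\mathrm{im}})$: the Hopf--Galois condition for the reduced canonical map $\mathrm{can}_0:A_0\otimes_{B_0}A_0\to A_0\otimes H_\delta$, and the vertical-map condition on the calculus. The starting point is Lemma~\ref{lem:coaction-descends}, which already supplies the descended coaction $\overline{\delta}_{\mathrm{im}}:A_0\to A_0\otimes H_\delta$ and makes $A_0$ a right $H_\delta$--comodule algebra; everything else is built on top of it.

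First I would identify the reduced base. Since $\delta=(\mathrm{id}\otimes\iota_{H_\delta})\circ\delta_{\mathrm{im}}$ with $\iota_{H_\delta}$ injective, a direct check gives $A^{\operatorname{co}(H_\delta)}=A^{\operatorname{co}(H)}=B$. The ideal $I$ of Lemma~\ref{lem:largestideal} is by construction an $H_\delta$--subcomodule of $A$, so $0\to I\to A\to A_0\to 0$ is exact in $H_\delta$--comodules. Here cosemisimplicity of $H_\delta$ enters for the first time: the comodule category is semisimple, hence the coinvariants functor $(-)^{\operatorname{co}(H_\delta)}$ is exact, and therefore $B_0=A_0^{\operatorname{co}(H_\delta)}=\pi(B)$. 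This identification is what makes the $B_0$--bimodule structure on $A$ of Remark~\ref{rem:tensor-over-B0} and the balanced products $A\otimes_{B_0}A$ compatible with the original $B$--action.

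The core of the proof is the Hopf--Galois axiom, and this is where I expect the main difficulty. I would first show that $\mathrm{can}_\delta:A\otimes_{B_0}A\to A\otimes H_\delta$, $a\otimes_{B_0}a'\mapsto a\,\delta_{\mathrm{im}}(a')$, is surjective, so that Lemma~\ref{lem:image-I} becomes applicable. Its image is the left $A$--submodule $A\cdot\delta_{\mathrm{im}}(A)\subseteq A\otimes H_\delta$, while the original canonical map has image $A\cdot\delta(A)=(\mathrm{id}\otimes\iota_{H_\delta})\bigl(A\cdot\delta_{\mathrm{im}}(A)\bigr)$, which equals all of $A\otimes H$ by the bundle hypothesis. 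Hence for every $h\in H_\delta$ the element $1\otimes\iota_{H_\delta}(h)=(\mathrm{id}\otimes\iota_{H_\delta})(1\otimes h)$ lies in $(\mathrm{id}\otimes\iota_{H_\delta})(A\cdot\delta_{\mathrm{im}}(A))$, and injectivity of $\mathrm{id}\otimes\iota_{H_\delta}$ forces $1\otimes h\in A\cdot\delta_{\mathrm{im}}(A)$; being a left $A$--module, this image then contains $A\otimes H_\delta$, giving surjectivity of $\mathrm{can}_\delta$. With this in hand Lemma~\ref{lem:image-I} supplies both $\mathrm{can}_\delta(I\otimes_{B_0}A+A\otimes_{B_0}I)=I\otimes H_\delta$ and $\mathrm{can}_\delta^{-1}(I\otimes H_\delta)=I\otimes_{B_0}A+A\otimes_{B_0}I$. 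Using $A_0\otimes_{B_0}A_0=(A\otimes_{B_0}A)/(I\otimes_{B_0}A+A\otimes_{B_0}I)$ and $A_0\otimes H_\delta=(A\otimes H_\delta)/(I\otimes H_\delta)$, the map $\mathrm{can}_0$ is exactly the map induced by $\mathrm{can}_\delta$ on these quotients: surjectivity of $\mathrm{can}_0$ is inherited, and injectivity is the preimage identity, for if $\mathrm{can}_0[x]=0$ then any lift $x$ has $\mathrm{can}_\delta(x)\in I\otimes H_\delta$, whence $x\in I\otimes_{B_0}A+A\otimes_{B_0}I$ and $[x]=0$. Thus $\mathrm{can}_0$ is bijective.

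Finally, for the calculus axiom I would transport $\Omega^1(A)$ along $\pi$. Writing $\Omega^1_u(A_0)=\ker(m_{A_0})$ and setting $N_0:=(\pi\otimes\pi)(N)$, one checks that $N_0$ is an $A_0$--sub-bimodule (it lands in $\Omega^1_u(A_0)$ because $\pi$ is multiplicative) and takes $\Omega^1(A_0):=\Omega^1_u(A_0)/N_0$; the covariance map descends to $\Delta_{R,0}([a]\,\mathrm{d}[b])=[a_{(0)}]\,\mathrm{d}[b_{(0)}]\otimes a_{(1)}b_{(1)}$ since $\pi\otimes\pi$ intertwines multiplication and the coefficients already lie in $H_\delta$. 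The key compatibility $\mathrm{ver}_0\circ(\pi\otimes\pi)=(\pi\otimes\mathrm{id}_{H_\delta})\circ\mathrm{ver}$ follows by substituting $\overline{\delta}_{\mathrm{im}}([a])=[a_{(0)}]\otimes a_{(1)}$, and applying $\pi\otimes\mathrm{id}$ to the axiom relation $\mathrm{ver}(N)=A\otimes I_{\mathrm{calc}}$ (with $I_{\mathrm{calc}}\subseteq H^+$ the $\mathrm{Ad}_R$--subcomodule right ideal of the bundle) yields $\mathrm{ver}_0(N_0)=A_0\otimes I_{\mathrm{calc}}$. Moreover $\mathrm{ver}$ already takes values in $A\otimes H_\delta^+$ once the coaction is known to factor through $H_\delta$, so $I_{\mathrm{calc}}\subseteq H_\delta^+$ automatically and remains an $\mathrm{Ad}_R$--subcomodule right ideal; this verifies the second axiom with $I_{\mathrm{calc},0}=I_{\mathrm{calc}}$. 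The genuine obstacle throughout is the Hopf--Galois step: one must simultaneously control the passage to the coarser base $B_0$, the quotient by $I$, and the descent of coefficients to $H_\delta$, and it is cosemisimplicity of $H_\delta$ that keeps these three reductions compatible by forcing $B_0=\pi(B)$ and by making the balanced tensor products of Lemma~\ref{lem:image-I} behave.
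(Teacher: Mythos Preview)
Your argument is correct and its core---deducing bijectivity of $\mathrm{can}_0$ from surjectivity of $\mathrm{can}_\delta$ together with both halves of Lemma~\ref{lem:image-I}, via the standard linear-algebra fact about maps induced on quotients---is exactly the paper's route. The differences lie at the margins. First, the paper deploys cosemisimplicity of $H_\delta$ not to identify $B_0=\pi(B)$ but to conclude, via Schneider's theorem combined with injectivity of all $H_\delta$--comodules, that $A_0$ is \emph{faithfully flat} over $B_0$; this is an additional conclusion the paper records (strictly beyond what Definition~2.2 demands) which your write-up does not address. Conversely, your use of cosemisimplicity for exactness of $(-)^{\operatorname{co}(H_\delta)}$ is pleasant but not load-bearing, since Remark~\ref{rem:tensor-over-B0} already supplies the $B_0$--bimodule structure on $A$ without knowing $B_0=\pi(B)$. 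Second, your surjectivity of $\mathrm{can}_\delta$ runs through injectivity of $\mathrm{id}\otimes\iota_{H_\delta}$, whereas the paper passes through the natural map $A\otimes_B A\to A\otimes_{B_0}A$; both are fine. Third, for the calculus the paper appeals to Woronowicz's classification on $A_0$ and checks $\mathrm{Ad}_R$--stability of the associated right ideal $\mathcal K\subseteq H_\delta^+$ directly from the Sweedler identities, while you transport $N$ along $\pi\otimes\pi$ and observe that $I_{\mathrm{calc}}\subseteq H_\delta^+$ is automatic once $\delta$ factors through $H_\delta$; your version is more explicit and arguably cleaner.
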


\begin{proof}
By Lemma~\ref{lem:coaction-descends}, the map
\[
\overline{\delta}_{\mathrm{im}}:A_{0}\longrightarrow A_{0}\otimes H_{\delta}
\] given by $$[a]\longmapsto (\pi\otimes \mathrm{id})\delta_{\mathrm{im}}(a)$$ 
is a well-defined right coaction of the Hopf algebra $H_{\delta}$ on $A_{0}$, where $\delta_{\mathrm{im}}: A\rightarrow A\otimes H_{\delta}$ and is given by $\delta_{\mathrm{im}}(a)= a_{(0)}\otimes a_{(1)}$.
Thus, it suffices to show that $A_{0}$ is a faithfully flat Hopf--Galois
extension of $B_{0}$ and that $\Omega^{1}(A_{0})$ is right $H_{\delta}$--covariant.

Since $(A,\Omega^1(A),H,\delta)$ is a quantum principal $H$--bundle, the
extension $A\subset B$ is Hopf--Galois, and hence the canonical map
\[
\mathrm{can}:A\otimes_{B}A\longrightarrow A\otimes H
\]
is bijective. By the definition of the Hopf image $H_{\delta}$ of the coaction
$\delta:A\to A\otimes H$, together with the inclusion
$A\otimes_{B}A\subseteq A\otimes_{B_{0}}A$, it follows that the induced
canonical map
\[
\mathrm{can}_{\delta}:A\otimes_{B_{0}}A\longrightarrow A\otimes H_{\delta}
\]
is surjective.

By the first part of Lemma~\ref{lem:image-I}, we have
\[
\mathrm{can}_{\delta}\bigl(I\otimes_{B_{0}}A + A\otimes_{B_{0}}I\bigr)
=
I\otimes H_{\delta}.
\]
Set
\[
N:=I\otimes_{B_{0}}A + A\otimes_{B_{0}}I.
\]
Passing to quotients, we obtain an induced map
\[
\overline{\mathrm{can}}_{\delta}:
\frac{A\otimes_{B_{0}}A}{N}
\longrightarrow
\frac{A\otimes H_{\delta}}{\mathrm{can}_{\delta}(N)},
\qquad
[a\otimes_{B_{0}}a']_{N}
\longmapsto
[\mathrm{can}_{\delta}(a\otimes_{B_{0}}a')]_{\mathrm{can}_{\delta}(N)}.
\]

We now use the following standard fact from linear algebra: if
$f:X\to Y$ is a linear map and $M\subseteq X$ is a linear subspace, then $f$
induces a well-defined map
\[
\overline f:X/M\longrightarrow Y/f(M),
\qquad
x+M\longmapsto f(x)+f(M),
\]
which is injective if and only if $f^{-1}(f(M))=M$. Moreover, if $f$ is
surjective, then $\overline f$ is surjective. In particular, $\overline f$ is
bijective if and only if $f$ is surjective and $f^{-1}(f(M))=M$.

Using the second part of Lemma~\ref{lem:image-I}, namely
\[
\mathrm{can}_{\delta}^{-1}\bigl(\mathrm{can}_{\delta}(N)\bigr)=N,
\]
together with the surjectivity of $\mathrm{can}_{\delta}$, we conclude that
$\overline{\mathrm{can}}_{\delta}$ is bijective.

Identifying
\[
\frac{A\otimes_{B_{0}}A}{I\otimes_{B_{0}}A + A\otimes_{B_{0}}I}
\cong
A_{0}\otimes_{B_{0}}A_{0},
\qquad
\frac{A\otimes H_{\delta}}{I\otimes H_{\delta}}
\cong
A_{0}\otimes H_{\delta},
\]
we obtain a bijection
\[
\overline{\mathrm{can}}_{\delta}:
A_{0}\otimes_{B_{0}}A_{0}
\longrightarrow
A_{0}\otimes H_{\delta}.
\]
Hence, $A_{0}$ is a Hopf--Galois extension of $B_{0}$.

Faithful flatness of $A_{0}$ as a right $B_{0}$--module now follows from the
cosemisimplicity of $H_{\delta}$ together with the results of
D\u{a}sc\u{a}lescu--N\u{a}st\u{a}sescu--Raianu
\cite[Theorem~3.1.5]{dascalescu2000hopf}
and Schneider
\cite[Theorem~I]{schneider1990principal}, which assert that:
\begin{enumerate}
\item $H_{\delta}$ is cosemisimple if and only if every right
$H_{\delta}$--comodule is injective;
\item $A_{0}$ is injective as a right $H_{\delta}$--comodule and
$\overline{\mathrm{can}}_{\delta}$ is surjective if and only if $A_{0}$ is
faithfully flat as a right $B_{0}$--module and
$\overline{\mathrm{can}}_{\delta}$ is an isomorphism.
\end{enumerate}

Finally, we show that $\Omega^{1}(A_{0})$ is right $H_{\delta}$--covariant.
By \cite[Theorem~1.5]{woronowicz1989differential}, there exists an
$A_{0}$--subbimodule $\mathcal{F}\subseteq\Omega^{1}_{u}(A_{0})$ such that
\[
\Omega^{1}(A_{0})\cong \Omega^{1}_{u}(A_{0})/\mathcal{F},
\]
and a corresponding right ideal $\mathcal{K}\subseteq H_{\delta}^{+}$ satisfying
$\mathrm{ver}(\mathcal{F})=A_{0}\otimes\mathcal{K}$, for which the diagram
\cite{brzezinski1993quantum,woronowicz1989differential}
commutes:
\[
\begin{tikzcd}
\Omega^1_{u}(A_{0}) \arrow[d, "\mathrm{ver}"']
\arrow[rr, "\delta_{\mathrm{im},u}"]
&&
\Omega^1_{u}(A_{0})\otimes H_{\delta}
\arrow[d, "\mathrm{ver}\otimes\mathrm{id}_{H_{\delta}}"]
\\
A_{0}\otimes H_{\delta}^{+}
\arrow[rr, "\mathrm{id}\otimes\mathrm{Ad}_{R}"]
&&
A_{0}\otimes H_{\delta}^{+}\otimes H_{\delta}
\end{tikzcd}
\]
Thus it suffices to show that
$\delta_{\mathrm{im},u}(\mathcal{F})\subseteq\mathcal{F}\otimes H_{\delta}$,
which follows from
$\mathrm{Ad}_{R}(\mathcal{K})\subseteq\mathcal{K}\otimes H_{\delta}$.
Indeed, for $a\in\mathcal{K}$ we have
\[
(\mathrm{id}\otimes\varepsilon_{H_{\delta}})\mathrm{Ad}_{R}(a)=a,
\]
so that $\varepsilon_{H_{\delta}}(S(a_{(1)})a_{(3)})a_{(2)}=a$, and hence
$a_{(2)}\in\mathcal{K}$.
This completes the proof.
\end{proof}
\begin{corollary}
    Let $H$ be a cosemisimple Hopf algebra and $(A,\Omega^1(A),H,\delta)$ be a quantum principal $H$-bundle. Then $(A_{0},\Omega^1(A_{0}), H_{\delta}, \overline{\delta}_{\mathrm{im}})$ is a quantum principal $H_{\delta}$-bundle.
\end{corollary}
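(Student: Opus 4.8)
The plan is to derive this corollary directly from the preceding proposition. That proposition already shows that $(A_{0},\Omega^1(A_{0}),H_{\delta},\overline{\delta}_{\mathrm{im}})$ is a quantum principal $H_\delta$-bundle \emph{provided} the Hopf image $H_\delta$ is cosemisimple. Here we are instead given that the ambient Hopf algebra $H$ is cosemisimple. Since by Theorem~\ref{thm:Hopfimage} the Hopf image $H_\delta$ is a Hopf \emph{subalgebra} of $H$, the entire content of the corollary reduces to a single point: \textbf{cosemisimplicity passes to Hopf subalgebras}. Once this is established, the hypothesis of the proposition is satisfied with $H_\delta$ in place of the structure Hopf algebra, and the conclusion follows verbatim.

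To prove that $H_\delta$ inherits cosemisimplicity from $H$, I would use the integral characterization of cosemisimplicity (see \cite{dascalescu2000hopf}): a Hopf algebra is cosemisimple if and only if it admits a normalized two-sided integral $\lambda \in H^*$, i.e.\ a functional with $\lambda(1_H)=1$ satisfying
\[
\sum \lambda(h_{(1)})\,h_{(2)} = \lambda(h)\,1_H = \sum h_{(1)}\,\lambda(h_{(2)}), \qquad h \in H.
\]
Fix such a $\lambda$ for the cosemisimple $H$, and set $\lambda_0 := \lambda \circ \iota_{H_\delta}$, the restriction of $\lambda$ to $H_\delta$. The key observation is that, because $H_\delta$ is a Hopf subalgebra, the coproduct of $H$ restricts to that of $H_\delta$: for $k \in H_\delta$ one has $\Delta(k)=\sum k_{(1)}\otimes k_{(2)}$ with all $k_{(1)},k_{(2)}\in H_\delta$. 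Applying the integral identities above to such $k$, every term lands in $H_\delta$, so the defining equations for a normalized two-sided integral continue to hold inside $H_\delta$ with $\lambda_0$. Hence $\lambda_0$ is a normalized two-sided integral on $H_\delta$, and therefore $H_\delta$ is cosemisimple.

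With $H_\delta$ now known to be cosemisimple, the preceding proposition applies directly: the descended coaction $\overline{\delta}_{\mathrm{im}}$ is a well-defined right $H_\delta$-coaction on $A_0$ by Lemma~\ref{lem:coaction-descends}, the quotiented canonical map $\overline{\mathrm{can}}_\delta : A_0 \otimes_{B_0} A_0 \to A_0 \otimes H_\delta$ is bijective by Lemma~\ref{lem:image-I} together with the faithful flatness coming from cosemisimplicity, and $\Omega^1(A_0)$ is right $H_\delta$-covariant. This yields the asserted quantum principal $H_\delta$-bundle structure, so nothing beyond the proposition's argument needs to be reproduced.

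The only genuinely new ingredient beyond the proposition is the descent of cosemisimplicity to the Hopf subalgebra $H_\delta$, and this is exactly the step I expect to require the most care. The subtlety lies not in the existence of the restricted functional but in verifying that $\lambda_0$ genuinely satisfies the integral identities \emph{within} $H_\delta$; this is where one must invoke that $H_\delta$ is closed under the coproduct, which is guaranteed precisely because the Hopf image is a Hopf subalgebra rather than an arbitrary subalgebra. No compatibility with the differential calculus or the Hopf--Galois structure needs to be re-examined, since all of that is already handled by the proposition once cosemisimplicity of the structure Hopf algebra is in place.
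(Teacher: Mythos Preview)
Your proposal is correct and matches the paper's (implicit) approach: the paper states the corollary without proof, relying on the preceding proposition together with the standard fact that a Hopf subalgebra of a cosemisimple Hopf algebra is again cosemisimple, which is exactly the step you supply via restriction of the normalized integral. The brief recap of the proposition's argument is not needed, but nothing you wrote is wrong.
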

\begin{definition}
Let $H$ be a cosemisimple Hopf algebra and
$(A,\Omega^1(A),H,\delta)$ a quantum principal $H$--bundle.
The \emph{Hopf-image reduction} of this bundle is the quadruple
\[
(A_{0},\Omega^1(A_{0}),H_{\delta},\overline{\delta}_{\mathrm{im}}),
\]
where $H_{\delta}$ is the Hopf image of the coaction $\delta$,
$A_{0}:=A/I$ with $I$ the largest two-sided ideal of $A$ satisfying
\[
\delta_{\mathrm{im}}(I)\subseteq I\otimes H_{\delta},
\]
$\pi:A\to A_{0}$ is the canonical quotient map, and
$\overline{\delta}_{\mathrm{im}}:A_{0}\to A_{0}\otimes H_{\delta}$
is the induced right coaction of $H_{\delta}$ on $A_{0}$.
\end{definition}

\begin{proposition}\label{prop:inner-faithful-quotient}
Let $A$ be a right $H$--comodule algebra with coaction $\delta: A\rightarrow A\otimes H$ and let $(H_{\delta}, \iota_{H_{\delta}}, \delta_{\mathrm{im}})$ be the Hopf image of right coaction $\delta$. Let $I\subseteq A$ be the largest two--sided ideal such that
\[
\delta_{\mathrm{im}}(I)\subseteq I\otimes H_\delta,
\]
and let $A_0:=A/I$. Then the induced right coaction 
\[
\overline{\delta}_{\mathrm{im}}:A_0\longrightarrow A_0\otimes H_\delta
\]
 is inner--faithful.
\end{proposition}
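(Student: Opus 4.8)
The plan is to show directly that the Hopf image of $\overline{\delta}_{\mathrm{im}}$ equals $H_\delta$, which by Definition~\ref{def:innerfaithful} is exactly inner-faithfulness. I would compute this Hopf image using the generator description of Proposition~\ref{prop:generator} and then play it against the inner-faithfulness of $\delta_{\mathrm{im}}$ recorded in Proposition~\ref{prop:universitality}. The first step is to identify the coefficients of $\overline{\delta}_{\mathrm{im}}$. Since $\pi:A\to A_0$ is surjective, every element of $A_0$ is $\pi(a)$, and the relation $\overline{\delta}_{\mathrm{im}}\circ\pi=(\pi\otimes\mathrm{id})\circ\delta_{\mathrm{im}}$ gives, for $\eta\in A_0^*$,
\[
(\eta\otimes\mathrm{id})\overline{\delta}_{\mathrm{im}}(\pi(a))
=((\eta\circ\pi)\otimes\mathrm{id})\delta_{\mathrm{im}}(a).
\]
As $\eta$ runs over $A_0^*=(A/I)^*$, the pullback $\eta\circ\pi$ runs over the annihilator $I^{\perp}=\{\omega\in A^*:\omega(I)=0\}$. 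Hence, by Proposition~\ref{prop:generator}, the Hopf image $L$ of $\overline{\delta}_{\mathrm{im}}$ is the Hopf subalgebra of $H_\delta$ generated by $\{(\omega\otimes\mathrm{id})\delta_{\mathrm{im}}(a):a\in A,\ \omega\in I^{\perp}\}$, and by construction $L\subseteq H_\delta$.

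Next I would reduce the claim $L=H_\delta$ to an honest factorization of $\delta_{\mathrm{im}}$ through $L$. The inclusion $\overline{\delta}_{\mathrm{im}}(A_0)\subseteq A_0\otimes L$, fed through $\overline{\delta}_{\mathrm{im}}\circ\pi=(\pi\otimes\mathrm{id})\circ\delta_{\mathrm{im}}$, says that $(\pi\otimes\mathrm{id})\delta_{\mathrm{im}}(a)\in A_0\otimes L$ for every $a$. Choosing a vector-space projection $q:H_\delta\to H_\delta/L$ with $q|_L=0$ and applying $\mathrm{id}\otimes q$, this is equivalent to saying that the map $\gamma:=(\mathrm{id}\otimes q)\circ\delta_{\mathrm{im}}$ takes values in $I\otimes(H_\delta/L)$, i.e.
\[
\delta_{\mathrm{im}}(A)\subseteq A\otimes L + I\otimes H_\delta .
\]
If I can improve this to the honest factorization $\delta_{\mathrm{im}}(A)\subseteq A\otimes L$, then $L$ is a Hopf subalgebra through which $\delta_{\mathrm{im}}$ factors, and the inner-faithfulness of $\delta_{\mathrm{im}}$ from Proposition~\ref{prop:universitality} forces $H_\delta\subseteq L$, hence $L=H_\delta$ and $\overline{\delta}_{\mathrm{im}}$ is inner-faithful.

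The main obstacle is precisely this upgrade: showing that the correction term $I\otimes H_\delta$ is spurious. My plan here is to isolate the obstruction and feed it back into the largest-ideal property of $I$ from Lemma~\ref{lem:largestideal}. Let $K\subseteq I$ be the linear span of the first legs of $\gamma$,
\[
K=\mathrm{span}\{\chi\rightharpoonup a:a\in A,\ \chi\in H_\delta^*,\ \chi|_L=0\},
\qquad
\chi\rightharpoonup a:=(\mathrm{id}\otimes\chi)\delta_{\mathrm{im}}(a),
\]
where I have used $q$ to rewrite $(\mathrm{id}\otimes\psi)\gamma(a)=\chi\rightharpoonup a$ with $\chi=\psi\circ q$. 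Then $\gamma=0$ is equivalent to $K=0$, which in turn is equivalent to the desired $\delta_{\mathrm{im}}(A)\subseteq A\otimes L$. The delicate point is that factoring through the Hopf subalgebra $L$ is a multiplicative condition, whereas $K$ interacts only one-sidedly with products; the key lemma I expect to need is that the smallest $\delta_{\mathrm{im}}$-stable two-sided ideal containing $K$ is still contained in $I$, so that maximality of $I$—together with the coideal structure of $L^{+}=L\cap\ker\varepsilon$ controlling how the $L$-component of a product decomposes—pins it down and forces $K=0$. This is the step I expect to require the most care, and it is where the precise formulation of the largest coaction-stable ideal $I$ enters essentially; all preceding steps are formal consequences of Propositions~\ref{prop:generator} and~\ref{prop:universitality}.
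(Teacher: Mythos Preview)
Your approach and the paper's coincide: both aim to show that $\delta_{\mathrm{im}}$ itself factors through the Hopf image $L$ of $\overline{\delta}_{\mathrm{im}}$, and then invoke the inner--faithfulness of $\delta_{\mathrm{im}}$ from Proposition~\ref{prop:universitality} to conclude $L=H_\delta$. The paper runs this by contradiction, asserting that ``all coefficients of $\delta_{\mathrm{im}}(A)$, after passing to the quotient by $I$, lie in $K$'' and then applying Proposition~\ref{prop:generator} directly. You are more explicit: you correctly isolate that what one actually obtains is only
\[
\delta_{\mathrm{im}}(A)\subseteq A\otimes L + I\otimes H_\delta,
\]
i.e.\ control of coefficients paired against functionals in $I^{\perp}\subseteq A^*$, whereas Proposition~\ref{prop:generator} requires all of $A^*$. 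In this respect you have located precisely the step the paper's argument glosses over.

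That said, your proposed resolution does not close the gap. You want maximality of $I$ to force the obstruction space $K=\operatorname{span}\{\chi\rightharpoonup a:\chi|_L=0\}$ to vanish, but maximality points the wrong way: Lemma~\ref{lem:largestideal} says every $\delta_{\mathrm{im}}$--stable two--sided ideal is \emph{contained} in $I$, and gives no mechanism for concluding that any such ideal is zero. The observation that the stable ideal generated by $K$ lies in $I$ is automatic (already $K\subseteq I$) and yields no contradiction. To exploit maximality one would instead need, from $K\neq 0$, to manufacture a $\delta_{\mathrm{im}}$--stable ideal strictly \emph{larger} than $I$, and nothing in your sketch---including the remark on the coideal $L^{+}$---indicates how to do this. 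As written, both the paper's argument and yours halt at the same unproved upgrade from $\delta_{\mathrm{im}}(A)\subseteq A\otimes L+I\otimes H_\delta$ to $\delta_{\mathrm{im}}(A)\subseteq A\otimes L$; you have the virtue of flagging it honestly, but not of resolving it.
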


\begin{proof}
Suppose that $\overline{\delta}_{\mathrm{im}}$ is not inner--faithful.
Then there exists a factorization $(K, \iota_{K}, \overline{\gamma})$ of $\overline{\delta}_{\mathrm{im}}$ such that
\[
\overline{\delta}_{\mathrm{im}}
=
(\mathrm{id}\otimes\iota_K)\circ \overline{\gamma},
\qquad
\overline{\gamma}:A_0\to A_0\otimes K,
\]
where $\iota_K:K\hookrightarrow H_\delta$ denotes the inclusion.
Let $\pi:A\to A_0$ be the canonical quotient map.
Since $\overline{\delta}_{\mathrm{im}}$ is induced from $\delta_{\mathrm{im}}$,
we have
\[
(\pi\otimes \mathrm{id}_{H_\delta})\circ \delta_{\mathrm{im}}
=
\overline{\delta}_{\mathrm{im}}\circ \pi.
\]
Combining this identity with the above factorisation yields
\[
(\pi\otimes \mathrm{id}_{H_\delta})\circ \delta_{\mathrm{im}}
=
(\mathrm{id}\otimes \iota_K)\circ \overline{\gamma}\circ \pi.
\]
This implies that all coefficients of $\delta_{\mathrm{im}}(A)$, after
passing to the quotient by $I$, lie in $K$.
Since by Proposition~\ref{prop:generator} $H_\delta$ is generated as a Hopf algebra by the coefficients of
$\delta_{\mathrm{im}}(A)$, it follows that $H_\delta\subseteq K$, hence
$K=H_\delta$, a contradiction.
Therefore $\overline{\delta}_{\mathrm{im}}$ is inner--faithful.
\end{proof}

\begin{corollary}\label{cor:hopf-image-reduction}
Let $H$ be a cosemisimple Hopf algebra and
$(A,\Omega^1(A),H,\delta)$ a quantum principal $H$--bundle.
Then its Hopf-image reduction
\[
(A_{0},\Omega^1(A_{0}),H_{\delta},\overline{\delta}_{\mathrm{im}})
\]
is a quantum principal $H_{\delta}$--bundle whose right coaction
$\overline{\delta}_{\mathrm{im}}$ is inner--faithful.
In particular, if $H$ be a cosemisimple Hopf algebra, then every quantum principal $H$--bundle admits a canonical
reduction to a quantum principal bundle with effective (inner--faithful)
quantum symmetry.
\end{corollary}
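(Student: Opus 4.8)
The plan is to obtain this corollary as the synthesis of the two immediately preceding results: the proposition establishing that the Hopf-image reduction $(A_{0},\Omega^1(A_{0}),H_{\delta},\overline{\delta}_{\mathrm{im}})$ is a quantum principal $H_{\delta}$--bundle, and Proposition~\ref{prop:inner-faithful-quotient} establishing that $\overline{\delta}_{\mathrm{im}}$ is inner--faithful. The only genuine gap to bridge is that the proposition on the bundle structure is stated under the hypothesis that $H_{\delta}$ is cosemisimple, whereas here the standing hypothesis is that the \emph{ambient} Hopf algebra $H$ is cosemisimple. So the first task is to show that cosemisimplicity descends from $H$ to the Hopf subalgebra $H_{\delta}$.

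For this descent I would argue at the level of comodule categories rather than integrals. Recall that $H$ is cosemisimple precisely when every right $H$--comodule is semisimple. Given any right $H_{\delta}$--comodule $(V,\rho)$ with $\rho:V\to V\otimes H_{\delta}$, I would push it forward along the inclusion to the $H$--comodule $(\mathrm{id}_{V}\otimes\iota_{H_{\delta}})\circ\rho:V\to V\otimes H$. The key observation is that a subspace $W\subseteq V$ is an $H_{\delta}$--subcomodule if and only if it is an $H$--subcomodule: since $\iota_{H_{\delta}}$ is injective, the condition $\rho(W)\subseteq W\otimes H_{\delta}$ is equivalent to $(\mathrm{id}\otimes\iota_{H_{\delta}})\rho(W)\subseteq W\otimes H$. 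Hence the lattice of subcomodules is unchanged by restriction of structure, so that $V$ is simple (resp.\ decomposes as a direct sum) as an $H_{\delta}$--comodule exactly when it is so as an $H$--comodule. Semisimplicity of $V$ as an $H$--comodule therefore forces semisimplicity of $V$ as an $H_{\delta}$--comodule, and since $V$ was arbitrary, $H_{\delta}$ is cosemisimple.

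With cosemisimplicity of $H_{\delta}$ in hand, I would invoke the preceding proposition directly: because $(A,\Omega^1(A),H,\delta)$ is a quantum principal $H$--bundle and $H_{\delta}$ is cosemisimple, $(A_{0},\Omega^1(A_{0}),H_{\delta},\overline{\delta}_{\mathrm{im}})$ is a quantum principal $H_{\delta}$--bundle, with faithful flatness and bijectivity of $\overline{\mathrm{can}}_{\delta}$ arising exactly from the cosemisimplicity input there. Proposition~\ref{prop:inner-faithful-quotient} then supplies inner--faithfulness of $\overline{\delta}_{\mathrm{im}}$, and assembling the two facts yields the first assertion. The ``in particular'' clause is then immediate: the Hopf image $H_{\delta}$, the largest $\delta_{\mathrm{im}}$--stable ideal $I$ of Lemma~\ref{lem:largestideal}, and the descended coaction $\overline{\delta}_{\mathrm{im}}$ of Lemma~\ref{lem:coaction-descends} are all canonically determined by the data $(A,\Omega^1(A),H,\delta)$, so the construction furnishes the desired canonical reduction to a bundle with effective (inner--faithful) quantum symmetry.

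I expect the comodule-theoretic cosemisimplicity descent to be the only step requiring real argument; once it is in place the corollary is a formal combination of results already proved. A careful writer might instead prefer the integral-theoretic route, restricting a normalized Haar functional along $\iota_{H_{\delta}}$, but the subcomodule-lattice argument is cleaner and sidesteps any subtlety about whether the Haar functional of $H$ restricts to one on $H_{\delta}$.
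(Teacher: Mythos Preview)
Your proposal is correct and matches the paper's intent: the corollary is stated in the paper without proof, as an immediate combination of the preceding proposition (that the Hopf-image reduction is a quantum principal $H_\delta$--bundle) and Proposition~\ref{prop:inner-faithful-quotient}. You are more explicit than the paper in one respect: you supply the argument that cosemisimplicity passes from $H$ to the Hopf subalgebra $H_\delta$, via the observation that the inclusion $\iota_{H_\delta}$ induces an identification of subcomodule lattices; the paper leaves this descent entirely tacit (it is already implicitly used in the unlabeled corollary immediately following the main proposition), so your write-up in fact fills a small gap the paper does not address.
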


\begin{theorem}\label{thm:rigidity-hopf-image}
Let
$(A,\Omega^1(A),H,\delta)$ be a quantum principal bundle with cosemisimple structure Hopf algebra $H$, and let $(A_0,\Omega^1(A_0),H_\delta,\overline{\delta}_{\mathrm{im}})$ be its Hopf-image reduction over $B_0 := A_0^{\operatorname{co}(H_\delta)}$. Suppose that
$(A_0,\Omega^1(A_0),K,\delta_K)$ is a quantum principal bundle with cosemisimple structure Hopf algebra $K$ and inner--faithful right coaction $\delta_K$ such that $A_0^{\operatorname{co}(K)} = B_0$.
Then there exists a unique injective Hopf algebra morphism
\[
\iota : H_\delta \hookrightarrow K
\]
such that
\[
\delta_K = (\mathrm{id}_{A_0} \otimes \iota)\circ
\overline{\delta}_{\mathrm{im}}.
\]

In particular, $H_\delta$ is the minimal effective (inner--faithful) quantum
symmetry acting on $A_0$.
\end{theorem}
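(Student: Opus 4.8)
The plan is to reduce the entire statement to the single task of \emph{constructing} a Hopf algebra morphism $\iota:H_\delta\to K$ satisfying $\delta_K=(\mathrm{id}_{A_0}\otimes\iota)\circ\overline{\delta}_{\mathrm{im}}$, and then to obtain uniqueness, surjectivity and injectivity as formal consequences of inner-faithfulness together with the coefficient description of the Hopf image. First I would observe that the intertwining relation forces $\iota$ on coefficients: applying $(\omega\otimes\mathrm{id})$ for $\omega\in A_0^\ast$ gives
\[
\iota\bigl((\omega\otimes\mathrm{id})\overline{\delta}_{\mathrm{im}}(a)\bigr)=(\omega\otimes\mathrm{id})\delta_K(a),\qquad a\in A_0 .
\]
Since by Proposition~\ref{prop:generator} the set $\mathcal C_{\overline{\delta}_{\mathrm{im}}}=\{(\omega\otimes\mathrm{id})\overline{\delta}_{\mathrm{im}}(a)\}$ generates $H_\delta$ as a Hopf algebra, any morphism respecting this relation is determined on a generating set, giving uniqueness. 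Surjectivity is immediate: the image of $\iota$ contains every coefficient $(\omega\otimes\mathrm{id})\delta_K(a)$, and by Proposition~\ref{prop:generator} together with the assumed inner-faithfulness of $\delta_K$ these generate $K$, so $\iota(H_\delta)=K$. Thus the whole content of the theorem lies in \emph{existence} and \emph{injectivity} of $\iota$, which amount to the assertion that the displayed assignment on coefficients is a well-defined bijection between the coefficient coalgebras of $\overline{\delta}_{\mathrm{im}}$ and $\delta_K$; equivalently, that the two coactions have exactly the same space of linear relations among their coefficients.

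To produce $\iota$ I would work through the two faithfully flat Hopf–Galois structures over the common base $B_0=A_0^{\operatorname{co}(H_\delta)}=A_0^{\operatorname{co}(K)}$. Faithful flatness of $A_0$ over $B_0$ for both structures is exactly where cosemisimplicity of $H_\delta$ and of $K$ enters, via the Schneider \cite{schneider1990principal} and D\u{a}sc\u{a}lescu–N\u{a}st\u{a}sescu–Raianu \cite{dascalescu2000hopf} results already used in the reduction (Corollary~\ref{cor:hopf-image-reduction}). Hence both canonical maps
\[
\mathrm{can}_{H_\delta}:A_0\otimes_{B_0}A_0\to A_0\otimes H_\delta,\qquad \mathrm{can}_{K}:A_0\otimes_{B_0}A_0\to A_0\otimes K
\]
are bijective, and I would form the left $A_0$-linear isomorphism $\Theta:=\mathrm{can}_K\circ\mathrm{can}_{H_\delta}^{-1}:A_0\otimes H_\delta\to A_0\otimes K$. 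Using the translation map $\tau(h)=h^{[1]}\otimes_{B_0}h^{[2]}=\mathrm{can}_{H_\delta}^{-1}(1\otimes h)$ one computes $\Theta(1\otimes h)=h^{[1]}(h^{[2]})_{[0]}\otimes(h^{[2]})_{[1]}$, with $(\cdot)_{[0]}\otimes(\cdot)_{[1]}$ denoting $\delta_K$. I would then exploit the right-colinearity of both canonical maps, each intertwining second-leg comultiplication with the corresponding coaction on $A_0\otimes_{B_0}A_0$, together with the standard translation-map identities, to track how $\Theta$ transports coalgebra and algebra structure, and to read off the candidate $\iota$ by projecting $\Theta(1\otimes h)$ onto its $K$-component along the shared coinvariants.

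The main obstacle, and the technical heart of the argument, is precisely to show that this comparison descends to a bona fide Hopf algebra morphism rather than merely a left-module isomorphism: a priori $\Theta(1\otimes h)$ need not lie in $1\otimes K$, and the coefficient assignment above need not be well-defined. Indeed, matching base and total space alone does \emph{not} canonically determine the structure Hopf algebra, for the same reason that classically a principal bundle and its base do not determine the structure group once one forgets the bundle map (the torsor phenomenon). The point where I expect the real work to lie is therefore in proving the \emph{equality} of the two coefficient-relation spaces, i.e. that $\sum_i(\omega_i\otimes\mathrm{id})\overline{\delta}_{\mathrm{im}}(a_i)=0$ holds in $H_\delta$ if and only if $\sum_i(\omega_i\otimes\mathrm{id})\delta_K(a_i)=0$ holds in $K$; the forward implication gives well-definedness of $\iota$ and the reverse gives injectivity.

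To break this symmetry I would bring in the \emph{shared} right-covariant first-order calculus $\Omega^1(A_0)$, which is the hypothesis distinguishing this situation from an arbitrary pair of Galois structures. The same sub-bimodule $\mathcal F\subseteq\Omega^1_u(A_0)$ must be vertical for both coactions, so by the Woronowicz \cite{woronowicz1989differential} and Brzezi\'nski–Majid \cite{brzezinski1993quantum} correspondence its two verticalizations determine $\mathrm{Ad}_R$-stable right ideals $\mathcal K\subseteq H_\delta^{+}$ and $\mathcal K'\subseteq K^{+}$ that correspond under $\Theta$. I would use this compatibility, combined with the bijectivity statements of Lemma~\ref{lem:image-I} and inner-faithfulness (which by Proposition~\ref{prop:inner-faithful-quotient} and Proposition~\ref{prop:innerfaithful} pins down $H_\delta$ and $K$ as the Hopf subalgebras generated by their own coaction coefficients), to force $\Theta$ to respect the coproducts and hence to descend to $\iota$. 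Verifying multiplicativity and comultiplicativity of $\iota$ from the translation identities, and deducing injectivity from the reverse relation-space inclusion, are the steps I anticipate will demand the most care; once they are in place, the final ``minimality'' clause is simply the observation that the produced injective $\iota$ exhibits $H_\delta$ inside every such effective symmetry $K$.
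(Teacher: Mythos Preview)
Your route is much more elaborate than the paper's and, at its core, remains incomplete. The paper's argument is brief: it invokes the universal property of the Hopf image to produce $\iota:H_\delta\to K$ with $\delta_K=(\mathrm{id}_{A_0}\otimes\iota)\circ\overline{\delta}_{\mathrm{im}}$ directly, and then derives injectivity by noting that otherwise $\delta_K$ would factor through the proper Hopf subalgebra $\iota(H_\delta)\subsetneq K$, contradicting inner-faithfulness of $\delta_K$. No Galois canonical maps, translation maps, faithful-flatness arguments, or differential-calculus data are used at all.

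The gap in your proposal is exactly where you yourself locate it: you set up $\Theta=\mathrm{can}_K\circ\mathrm{can}_{H_\delta}^{-1}$ and correctly observe that the entire content lies in showing the coefficient assignment $(\omega\otimes\mathrm{id})\overline{\delta}_{\mathrm{im}}(a)\mapsto(\omega\otimes\mathrm{id})\delta_K(a)$ is well-defined, but you do not prove this. Your plan to force it via the shared calculus is not an argument---the vertical ideal $\mathcal K$ records only a right ideal of the augmentation ideal, which is far from enough to pin down the coaction---and you explicitly defer the multiplicativity and comultiplicativity checks as ``steps I anticipate will demand the most care.'' Your suspicion that having the same base, same total space, and same calculus does not by itself relate two coactions through a Hopf morphism is well-founded; the paper circumvents this by its direct appeal to the universal property, though one may note that the universal property actually established in Theorem~\ref{thm:Hopfimage} governs factorizations through Hopf subalgebras of a \emph{fixed} ambient Hopf algebra, not maps into an unrelated $K$, so the paper's step is itself terse at precisely this point. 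Your incidental observation that any intertwining $\iota$ is automatically \emph{surjective} (its image contains the coefficients of $\delta_K$, which generate $K$ by inner-faithfulness), hence bijective, is correct and in fact sharper than what the theorem records.
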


\begin{proof}
By construction, $\overline{\delta}_{\mathrm{im}}:A_0\to A_0\otimes H_\delta$ is
an inner--faithful right coaction of $H_\delta$ on $A_0$.
Since $\delta_K:A_0\to A_0\otimes K$ is a right coaction with the same algebra of
coinvariants $B_0$, both coactions define quantum principal bundle structures
over the same base.

Consider the right $K$--coaction $\delta_K$.
Since $\delta_K$ is inner--faithful, its Hopf image coincides with $K$ itself.
On the other hand, by the universal property of the Hopf image
$H_\delta$, any right coaction of a Hopf algebra on $A_0$ whose image contains
that of $\overline{\delta}_{\mathrm{im}}$ factors uniquely through a Hopf algebra
morphism from $H_\delta$.

Applying the universal property of the Hopf image $H_\delta$ of
$\overline{\delta}_{\mathrm{im}}$, and observing that $\delta_K$ is a right
coaction of a Hopf algebra on $A_0$, there exists a unique Hopf algebra morphism
\[
\iota : H_\delta \to K
\]
such that
\[
\delta_K = (\mathrm{id}_{A_0}\otimes \iota)\circ \overline{\delta}_{\mathrm{im}}.
\]

Since $\delta_K$ is inner--faithful, its Hopf image coincides with $K$.
Suppose that the Hopf algebra morphism
\[
\iota : H_\delta \to K
\]
were not injective.
Then $\iota$ would factor through the proper Hopf quotient
$H_\delta/\ker(\iota)$, and hence the coaction $\delta_K$ would factor through
the Hopf subalgebra $\iota(H_\delta)\cong H_\delta/\ker(\iota)$ of $K$.
This contradicts the inner--faithfulness of $\delta_K$.
Therefore $\ker(\iota)=0$, and $\iota$ is injective.

The uniqueness of $\iota$ follows directly from the universal property of the
Hopf image.
This proves the claim.
\end{proof}

\section{Equivalence of quantum principal bundles up to effective symmetry}
In this section we formulate a categorical notion of equivalence for quantum
principal bundles based on their effective quantum symmetry.
Using the Hopf image reduction constructed in the previous section, we define an
equivalence relation on the class of quantum principal bundles by declaring two
objects equivalent if their Hopf image reductions are isomorphic in the
appropriate category.
We show that the Hopf image reduction functor selects a canonical representative
in each equivalence class, thereby reducing the study of quantum principal
bundles with cosemisimple structure Hopf algebras to those with inner--faithful
coactions.

\begin{definition}\label{def:cat-qpb-cosemi}
Let $\mathsf{QPB}_{\mathrm{cosemi}}$ denote the category whose objects are
quantum principal bundles $(A,\Omega^1(A),H,\delta)$ such that $H$ is a
cosemisimple Hopf algebra.

A morphism
\[
(\phi,\psi):
(A,\Omega^1(A),H,\delta)
\longrightarrow
(A',\Omega^1(A'),H',\delta')
\]
in $\mathsf{QPB}_{\mathrm{cosemi}}$ consists of
\begin{enumerate}
\item an algebra homomorphism $\phi:A\to A'$,
\item a Hopf algebra homomorphism $\psi:H\to H'$,
\end{enumerate}
satisfying the following conditions:
\begin{enumerate}
\item[(i)] (\emph{Equivariance})
\[
\delta'\circ\phi=(\phi\otimes\psi)\circ\delta;
\]
\item[(ii)] (\emph{Compatibility with calculi})
there exists an $A$--bimodule map
\[
\phi_*:\Omega^1(A)\longrightarrow \Omega^1(A')
\]
such that $\phi_*\circ \mathrm{d} = \mathrm{d'}\circ\phi$, where $\mathrm{d}$ and $\mathrm{d'}$ denote the
corresponding differentials.
\end{enumerate}
\end{definition}
\begin{lemma}\label{lem:phi-descends}
Let
\[
(\phi,\psi):
(A,\Omega^1(A),H,\delta)
\longrightarrow
(A',\Omega^1(A'),H',\delta')
\]
be a morphism in $\mathsf{QPB}_{\mathrm{cosemi}}$.
Let $H_\delta$ and $H'_{\delta'}$ be the Hopf images of $\delta$ and $\delta'$
respectively, and let
$I\subseteq A$, $I'\subseteq A'$ be the largest two--sided ideals satisfying
\[
\delta_{\mathrm{im}}(I)\subseteq I\otimes H_\delta,
\qquad
\delta'_{\mathrm{im}}(I')\subseteq I'\otimes H'_{\delta'}.
\]
Then
\[
\phi(I)\subseteq I'.
\]
\end{lemma}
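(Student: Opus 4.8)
The plan is to show that the two-sided ideal of $A'$ generated by $\phi(I)$ is itself $\delta'_{\mathrm{im}}$--stable in the sense of Lemma~\ref{lem:largestideal}, and then to invoke the maximality of $I'$. Since $\phi(I)$ need not be an ideal of $A'$, it is this passage to the generated ideal that makes the maximality argument applicable.

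First I would track the behaviour of coefficients under the morphism. Fix $i\in I$. Because $\delta=(\mathrm{id}_A\otimes\iota_{H_\delta})\circ\delta_{\mathrm{im}}$ and $\delta_{\mathrm{im}}(I)\subseteq I\otimes H_\delta$, I may write $\delta(i)=\sum_j i_j\otimes h_j$ with $i_j\in I$ and $h_j\in H_\delta$. Equivariance of $(\phi,\psi)$ then gives $\delta'(\phi(i))=(\phi\otimes\psi)\delta(i)=\sum_j \phi(i_j)\otimes\psi(h_j)$, which visibly lies in $\phi(I)\otimes H'$. On the other hand, $\delta'$ factors through its Hopf image by Theorem~\ref{thm:Hopfimage}, so $\delta'(\phi(i))\in A'\otimes H'_{\delta'}$. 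Invoking the elementary identity $(\phi(I)\otimes H')\cap(A'\otimes H'_{\delta'})=\phi(I)\otimes H'_{\delta'}$, valid for subspaces over a field, I conclude $\delta'(\phi(i))\in\phi(I)\otimes H'_{\delta'}$; since $\iota_{H'_{\delta'}}$ is injective and $\delta'=(\mathrm{id}\otimes\iota_{H'_{\delta'}})\circ\delta'_{\mathrm{im}}$, this membership transfers to the corestriction, giving $\delta'_{\mathrm{im}}(\phi(i))\in\phi(I)\otimes H'_{\delta'}$.

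Next I would set $I'':=A'\,\phi(I)\,A'$, the two-sided ideal of $A'$ generated by $\phi(I)$, and check $\delta'_{\mathrm{im}}(I'')\subseteq I''\otimes H'_{\delta'}$. As $\delta'_{\mathrm{im}}$ is an algebra homomorphism, it suffices to test on generators $a'\phi(i)b'$ and expand $\delta'_{\mathrm{im}}(a')\,\delta'_{\mathrm{im}}(\phi(i))\,\delta'_{\mathrm{im}}(b')$. Using the previous step together with componentwise multiplication in $A'\otimes H'_{\delta'}$, every resulting first tensor factor has the shape $x\,c\,y$ with $c\in\phi(I)$ and $x,y\in A'$, hence lies in $I''$, while the second factors remain in $H'_{\delta'}$. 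This yields the required stability, and the maximality of $I'$ from Lemma~\ref{lem:largestideal} forces $I''\subseteq I'$; since $\phi(I)\subseteq I''$, the inclusion $\phi(I)\subseteq I'$ follows.

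The main obstacle is the corestriction subtlety in the first step: the direct computation only places $\delta'(\phi(i))$ in $\phi(I)\otimes H'$, and one genuinely needs that $\delta'$ factors through the smaller Hopf algebra $H'_{\delta'}$ to pull the membership back into $\phi(I)\otimes H'_{\delta'}$. The tensor--intersection identity is exactly what accomplishes this, and it is essential here because the individual coefficients $\psi(h_j)$ need not lie in $H'_{\delta'}$---only their combined tensor does---so a coefficient-by-coefficient argument is unavailable. By contrast, the passage from $\phi(I)$ to the ideal it generates is routine precisely because $\delta'_{\mathrm{im}}$ is multiplicative and $I''$ is a two-sided ideal.
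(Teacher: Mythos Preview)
Your proof is correct and follows the same overall strategy as the paper's---show that $\phi(I)$ lies inside a $\delta'_{\mathrm{im}}$--stable two--sided ideal of $A'$ and then invoke the maximality of $I'$---but it handles the key technical step differently and, in fact, more carefully. The paper's argument asserts the existence of an induced Hopf algebra morphism $\psi_\delta:H_\delta\to H'_{\delta'}$ and uses it to write $\delta'_{\mathrm{im}}(\phi(a))=(\phi\otimes\psi_\delta)\,\delta_{\mathrm{im}}(a)$ directly; your approach bypasses this by noting that $\delta'(\phi(i))$ lies simultaneously in $\phi(I)\otimes H'$ (by equivariance) and in $A'\otimes H'_{\delta'}$ (by the Hopf--image factorisation), hence in their intersection $\phi(I)\otimes H'_{\delta'}$, and then transferring this to $\delta'_{\mathrm{im}}$ via injectivity of $\iota_{H'_{\delta'}}$. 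This is a genuine gain in self--containment: the restriction $\psi|_{H_\delta}$ landing in $H'_{\delta'}$ is not immediate from the universal property as stated (which characterises $H_\delta$ as a minimal \emph{sub}object, not by maps out of it), and, as you correctly observe, the individual coefficients $\psi(h_j)$ need not lie in $H'_{\delta'}$. You are also more explicit than the paper in passing from the subspace $\phi(I)$ to the two--sided ideal $I''=A'\phi(I)A'$ before appealing to maximality; the paper's phrase ``$\phi(I)$ is contained in a two--sided ideal of $A'$ which is $\delta'_{\mathrm{im}}$--stable'' leaves this step implicit.
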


\begin{proof}
Let $a\in I$.
Since $\delta_{\mathrm{im}}(I)\subseteq I\otimes H_\delta$, we have
\[
\delta_{\mathrm{im}}(a)\in I\otimes H_\delta.
\]
Using equivariance of $\phi$ and the induced Hopf algebra morphism
$\psi_\delta:H_\delta\to H'_{\delta'}$, we compute
\[
\delta'_{\mathrm{im}}(\phi(a))
=
(\phi\otimes\psi_\delta)\bigl(\delta_{\mathrm{im}}(a)\bigr)
\subseteq
\phi(I)\otimes H'_{\delta'}.
\]
Thus $\phi(I)$ is contained in a two--sided ideal of $A'$ which is
$\delta'_{\mathrm{im}}$--stable.
By maximality of $I'$, it follows that $\phi(I)\subseteq I'$.
\end{proof}

\begin{proposition}\label{prop:hopf-image-functor}
The Hopf-image reduction defines a functor
\[
\mathcal{R}:
\mathsf{QPB}_{\mathrm{cosemi}}
\longrightarrow
\mathsf{QPB}_{\mathrm{inner,cosemi}}
\]
from the category of quantum principal bundles with cosemisimple structure Hopf
algebra to the category of quantum principal bundles with cosemisimple structure Hopf algebras with inner--faithful
coaction.
\end{proposition}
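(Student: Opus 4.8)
The plan is to establish the two defining properties of a functor: that $\mathcal{R}$ carries objects of $\mathsf{QPB}_{\mathrm{cosemi}}$ to objects of $\mathsf{QPB}_{\mathrm{inner,cosemi}}$ and morphisms to morphisms, and that it respects identities and composition. On objects, $\mathcal{R}$ is the assignment $(A,\Omega^1(A),H,\delta)\mapsto(A_0,\Omega^1(A_0),H_\delta,\overline{\delta}_{\mathrm{im}})$. By Corollary~\ref{cor:hopf-image-reduction} this quadruple is a quantum principal $H_\delta$--bundle whose coaction $\overline{\delta}_{\mathrm{im}}$ is inner--faithful, so the only point requiring comment is cosemisimplicity of $H_\delta$; this follows from the standard fact that a Hopf subalgebra of a cosemisimple Hopf algebra is again cosemisimple (restrict the Haar functional). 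Hence the reduction indeed lands in $\mathsf{QPB}_{\mathrm{inner,cosemi}}$.

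On a morphism $(\phi,\psi)\colon(A,\Omega^1(A),H,\delta)\to(A',\Omega^1(A'),H',\delta')$ I would set $\mathcal{R}(\phi,\psi):=(\overline{\phi},\psi_\delta)$. Here $\overline{\phi}\colon A_0\to A_0'$ is the descent of $\phi$ to the quotients, which exists and is an algebra homomorphism precisely because $\phi(I)\subseteq I'$ by Lemma~\ref{lem:phi-descends}; and $\psi_\delta\colon H_\delta\to H'_{\delta'}$ is the restriction of $\psi$. The existence of $\psi_\delta$ is the induced Hopf morphism already used in Lemma~\ref{lem:phi-descends}: by equivariance $\delta'\circ\phi=(\phi\otimes\psi)\circ\delta$ together with $\delta'(A')\subseteq A'\otimes H'_{\delta'}$, the map $\psi$ sends each generator $(\omega\otimes\mathrm{id})\delta(a)$ of $H_\delta$ into $H'_{\delta'}$; since by Proposition~\ref{prop:generator} these generate $H_\delta$ as a Hopf subalgebra and $\psi$ is a Hopf map, one gets $\psi(H_\delta)\subseteq H'_{\delta'}$, which defines $\psi_\delta$.

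Next I would verify that $(\overline{\phi},\psi_\delta)$ is a morphism in $\mathsf{QPB}_{\mathrm{inner,cosemi}}$, i.e.\ that it satisfies the equivariance and calculus--compatibility conditions of Definition~\ref{def:cat-qpb-cosemi}. Equivariance, $\overline{\delta}'_{\mathrm{im}}\circ\overline{\phi}=(\overline{\phi}\otimes\psi_\delta)\circ\overline{\delta}_{\mathrm{im}}$, is a direct diagram chase: one applies $\pi'\otimes\mathrm{id}$ to the equivariance of $(\phi,\psi)$ and uses the defining relation $\overline{\delta}_{\mathrm{im}}\circ\pi=(\pi\otimes\mathrm{id})\circ\delta_{\mathrm{im}}$ from Lemma~\ref{lem:coaction-descends} and its primed analogue. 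For the calculi, I would realize $\Omega^1(A_0)$ as the induced quotient calculus of $\Omega^1(A)$ along $\pi\colon A\to A_0$, namely the quotient of $\Omega^1(A)$ by the sub--bimodule generated by $I\,\Omega^1(A)$, $\Omega^1(A)\,I$ and $\mathrm{d}I$; then $\phi_*$ descends to $\overline{\phi}_*\colon\Omega^1(A_0)\to\Omega^1(A_0')$ because $\phi(I)\subseteq I'$ and $\phi_*\circ\mathrm{d}=\mathrm{d}'\circ\phi$ force $\phi_*$ to carry this sub--bimodule into the corresponding one for $A'$, so that $\overline{\phi}_*$ is an $A_0$--bimodule map with $\overline{\phi}_*\circ\mathrm{d}_{0}=\mathrm{d}_{0}'\circ\overline{\phi}$.

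Finally, functoriality is formal: $\mathcal{R}(\mathrm{id}_A,\mathrm{id}_H)=(\mathrm{id}_{A_0},\mathrm{id}_{H_\delta})$ since the descent of an identity is the identity on the quotient, and $\mathcal{R}\bigl((\phi',\psi')\circ(\phi,\psi)\bigr)=\mathcal{R}(\phi',\psi')\circ\mathcal{R}(\phi,\psi)$ because passing to quotient algebras, restricting Hopf maps to Hopf images, and pushing forward calculi are each compatible with composition. I expect the main obstacle to be the calculus--compatibility step: it requires fixing the precise construction of $\Omega^1(A_0)$ out of $\Omega^1(A)$ and checking that the generating sub--bimodule of relations is respected by $\phi_*$, where the coaction--descent and ideal--stability results of Section~4 are the tools that make the bookkeeping go through. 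The well--definedness of $\psi_\delta$ in this variance is the second point to treat with care, but it is delivered by the coefficient description of the Hopf image in Proposition~\ref{prop:generator}.
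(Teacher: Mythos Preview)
Your proposal is correct and follows essentially the same approach as the paper: define $\mathcal{R}$ on objects via the Hopf-image reduction, on morphisms via the induced pair $(\overline{\phi},\psi_\delta)$ using Lemma~\ref{lem:phi-descends} and the Hopf-image description, check equivariance by a diagram chase, inherit calculus compatibility from that of $\phi$, and observe that identities and compositions are preserved formally. Your version is in fact more careful than the paper's in two places---you explicitly note that $H_\delta$ inherits cosemisimplicity from $H$, and you spell out the descent of $\phi_*$ to $\Omega^1(A_0)$---but the overall argument is the same.
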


\begin{proof}
On objects, $\mathcal{R}$ assigns to
$(A,\Omega^1(A),H,\delta)$ its Hopf-image reduction
\[
\mathcal{R}(A,\Omega^1(A),H,\delta)
:=
(A_0,\Omega^1(A_0),H_\delta,\overline{\delta}_{\mathrm{im}}).
\]

Let
\[
(\phi,\psi):
(A,\Omega^1(A),H,\delta)
\longrightarrow
(A',\Omega^1(A'),H',\delta')
\]
be a morphism in $\mathsf{QPB}_{\mathrm{cosemi}}$.
By the universal property of the Hopf image, $\psi$ restricts to a Hopf algebra
homomorphism
\[
\psi_\delta:H_\delta\longrightarrow H'_{\delta'}.
\]
By Lemma~\ref{lem:phi-descends}, $\phi(I)\subseteq I'$, hence $\phi$ induces a
unique algebra homomorphism
\[
\overline{\phi}:A_0\longrightarrow A'_0
\]
such that $\overline{\phi}\circ\pi=\pi'\circ\phi$.
The equivariance of $\phi$ implies that
\[
\overline{\delta'}_{\mathrm{im}}\circ\overline{\phi}
=
(\overline{\phi}\otimes\psi_\delta)\circ\overline{\delta}_{\mathrm{im}},
\]
so $(\overline{\phi},\psi_\delta)$ is a morphism of quantum principal bundles.
Compatibility with the induced differential calculi follows from the
compatibility of $\phi$ with $\Omega^1(A)$ and $\Omega^1(A')$.

Finally, by Proposition~\ref{prop:inner-faithful-quotient},
the coaction $\overline{\delta}_{\mathrm{im}}$ is inner--faithful.
Functoriality with respect to identities and composition is immediate from the
construction.
\end{proof}

\begin{proposition}\label{prop:R-essentially-surjective}
Let $\mathsf{QPB}_{\mathrm{inner,cosemi}}$ denote the full subcategory of
$\mathsf{QPB}_{\mathrm{cosemi}}$ consisting of quantum principal bundles whose
right coaction is inner--faithful.
Then the Hopf-image reduction functor
\[
\mathcal{R}:\mathsf{QPB}_{\mathrm{cosemi}}
\longrightarrow
\mathsf{QPB}_{\mathrm{inner,cosemi}}
\]
is essentially surjective.
\end{proposition}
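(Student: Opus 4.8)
The plan is to exploit that $\mathsf{QPB}_{\mathrm{inner,cosemi}}$ is, by definition, a \emph{full} subcategory of $\mathsf{QPB}_{\mathrm{cosemi}}$, so that every object $Y=(A_0,\Omega^1(A_0),K,\delta_K)$ of the target is already an object of the source. The strategy is then to take $X:=Y$ and to prove $\mathcal{R}(Y)\cong Y$ in $\mathsf{QPB}_{\mathrm{inner,cosemi}}$; in fact I expect to establish the stronger statement that $\mathcal{R}$ restricts to a functor naturally isomorphic to the identity on $\mathsf{QPB}_{\mathrm{inner,cosemi}}$, from which essential surjectivity follows immediately. Conceptually this says that Hopf--image reduction is idempotent and fixes every bundle whose symmetry is already effective.

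The first step is to identify the structure Hopf algebra of $\mathcal{R}(Y)$. Since $\delta_K$ is inner--faithful by hypothesis, Proposition~\ref{prop:innerfaithful} gives $H_{\delta_K}=K$, so the induced coaction $\delta_{\mathrm{im}}$ factoring through the Hopf image is nothing but $\delta_K$ itself, the inclusion $\iota_{H_{\delta_K}}$ being the identity of $K$. In particular the structure Hopf algebra is unchanged and remains cosemisimple, so $\mathcal{R}(Y)$ genuinely lands in $\mathsf{QPB}_{\mathrm{inner,cosemi}}$, and by Corollary~\ref{cor:hopf-image-reduction} its coaction is inner--faithful, as it must be.

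The decisive step is to show that the reduction ideal vanishes, $I=0$, so that the quotient map $\pi:A_0\to A_0/I$ is an isomorphism and the total space, base, and calculus are all preserved. Here I would use that $Y$ is \emph{already} a quantum principal $K$--bundle: its canonical map $\mathrm{can}\colon A_0\otimes_{B_0}A_0\to A_0\otimes K$, with $B_0=A_0^{\operatorname{co}(K)}$, is bijective, and because $H_{\delta_K}=K$ this is exactly the map $\mathrm{can}_\delta$ entering the Hopf--image construction. The ideal $I$ was introduced solely to repair a possible failure of injectivity of $\mathrm{can}_\delta$, as encoded in Lemma~\ref{lem:image-I} through the identity $\mathrm{can}_\delta^{-1}(I\otimes K)=I\otimes_{B_0}A_0+A_0\otimes_{B_0}I$; when $\mathrm{can}_\delta$ is already injective, no repair is needed, and I would argue that the largest coaction--stable ideal producing a genuine reduction degenerates to $I=0$. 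Concretely, I would invoke faithful flatness of $A_0$ over $B_0$ (guaranteed by cosemisimplicity of $K$ together with the Schneider and D\u{a}sc\u{a}lescu--N\u{a}st\u{a}sescu--Raianu results already cited) to control coaction--stable ideals and to force the reduced canonical map $\overline{\mathrm{can}}_\delta$ to agree with the already--bijective $\mathrm{can}$, pinning down $\pi$ as an isomorphism.

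The main obstacle is precisely this vanishing $I=0$. The subtlety is that coaction--stable two--sided ideals of $A_0$ need not be small a priori, so inner--faithfulness of $\delta_K$ alone does not obviously suffice; one must genuinely use the Hopf--Galois property (bijectivity of $\mathrm{can}$) and faithful flatness of the given bundle $Y$, not merely the effectiveness of its symmetry, to conclude that the reduction is trivial. Once $I=0$ and $H_{\delta_K}=K$ are established, the right $H_{\delta_K}$--covariant calculus $\Omega^1(A_0/I)$ coincides with $\Omega^1(A_0)$, and the pair $(\pi,\mathrm{id}_K)=(\mathrm{id}_{A_0},\mathrm{id}_K)$ is an isomorphism $Y\xrightarrow{\ \sim\ }\mathcal{R}(Y)$ in $\mathsf{QPB}_{\mathrm{inner,cosemi}}$, proving essential surjectivity. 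Finally, naturality of these isomorphisms in $Y$ — following from the compatibility $\overline{\phi}\circ\pi=\pi'\circ\phi$ recorded in the construction of $\mathcal{R}$ on morphisms (Proposition~\ref{prop:hopf-image-functor}) — upgrades the conclusion to $\mathcal{R}|_{\mathsf{QPB}_{\mathrm{inner,cosemi}}}\cong\mathrm{id}$, and the rigidity Theorem~\ref{thm:rigidity-hopf-image} corroborates that $K$ is the minimal effective symmetry, so no smaller inner--faithful reduction can occur.
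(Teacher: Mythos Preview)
Your strategy matches the paper's exactly: regard $Y$ as an object of $\mathsf{QPB}_{\mathrm{cosemi}}$ and show $\mathcal{R}(Y)\cong Y$ by establishing $H_{\delta_K}=K$ and $I=0$. The paper's own proof simply asserts both facts without argument, so your instinct to isolate $I=0$ as the substantive step is well placed.

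There is, however, a real gap at precisely this point, and your Hopf--Galois/faithful-flatness sketch does not close it. With $H_{\delta_K}=K$ and hence $\delta_{\mathrm{im}}=\delta_K$, the condition defining $I$ in Lemma~\ref{lem:largestideal} reads $\delta_K(I)\subseteq I\otimes K$, i.e.\ $I$ is merely a $\delta_K$-stable two-sided ideal; the whole algebra $A_0$ satisfies this, so as literally defined one gets $I=A_0$, not $0$. Even granting an implicit properness convention, bijectivity of $\mathrm{can}$ and faithful flatness over $B_0$ do not force coaction-stable ideals to vanish: take $K=k$ with the trivial coaction on any unital algebra $A_0$ possessing a nonzero proper ideal---this is an inner-faithful cosemisimple quantum principal $k$-bundle over $B_0=A_0$ with bijective canonical map $A_0\otimes_{A_0}A_0\to A_0$, yet every ideal of $A_0$ is $\delta_K$-stable. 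Your appeal to Lemma~\ref{lem:image-I} does not help either, since its identities hold for any coaction-stable $I$ regardless of size. Thus the conclusion ``$I=0$'' (and hence $\mathcal{R}(Y)\cong Y$) rests on an additional constraint on $I$, or on the bundle, that neither your proposal nor the paper supplies.
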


\begin{proof}
Let $(A,\Omega^1(A),H,\delta)$ be an object of
$\mathsf{QPB}_{\mathrm{inner,cosemi}}$.
Since the coaction $\delta$ is inner--faithful, its Hopf image coincides with
$H$, that is,
\[
H_{\delta}=H.
\]
Moreover, the largest two--sided ideal $I\subseteq A$ satisfying
$\delta_{\mathrm{im}}(I)\subseteq I\otimes H_{\delta}$ is zero.
Consequently, the Hopf-image reduction of
$(A,\Omega^1(A),H,\delta)$ is canonically isomorphic to itself.
Hence every object of $\mathsf{QPB}_{\mathrm{inner,cosemi}}$ lies in the
essential image of $\mathcal{R}$.
\end{proof}

\begin{proposition}\label{prop:classification-effective}
Let us define a relation $\sim$ on the class of objects of
$\mathsf{QPB}_{\mathrm{cosemi}}$ by declaring
\[
(A,\Omega^1(A),H,\delta)\sim
(A',\Omega^1(A'),H',\delta')
\]
if and only if their Hopf-image reductions are isomorphic as quantum principal
bundles.
Then $\sim$ is an equivalence relation, and the Hopf-image reduction functor
$\mathcal{R}$ induces a bijection.
\[
\mathsf{QPB}_{\mathrm{cosemi}}/\!\sim
\;\xrightarrow{\;\cong\;}
\mathrm{Iso}\bigl(\mathsf{QPB}_{\mathrm{inner,cosemi}}\bigr),
\]
where $\mathrm{Iso}(\mathsf{QPB}_{\mathrm{inner,cosemi}})$ denotes the set of
isomorphism classes of quantum principal bundles with cosemisimple structure
Hopf algebras and inner--faithful coactions.
\end{proposition}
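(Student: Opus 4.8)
The plan is to argue almost entirely formally, treating $\sim$ as the pullback of the isomorphism relation along the object assignment of $\mathcal{R}$, and then reading off the claimed bijection from well-definedness together with the essential surjectivity already established in Proposition~\ref{prop:R-essentially-surjective}.

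First I would verify that $\sim$ is an equivalence relation. By definition, $(A,\Omega^1(A),H,\delta)\sim(A',\Omega^1(A'),H',\delta')$ precisely when $\mathcal{R}(A,\Omega^1(A),H,\delta)\cong\mathcal{R}(A',\Omega^1(A'),H',\delta')$ in $\mathsf{QPB}_{\mathrm{inner,cosemi}}$. Since isomorphism of objects in any category is reflexive (via the identity morphism), symmetric (via the inverse of an isomorphism), and transitive (via composition of isomorphisms), these three properties transport verbatim to $\sim$. No bundle-specific input is required at this stage.

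Next I would introduce the induced map on equivalence classes, $\overline{\mathcal{R}}([X]):=[\mathcal{R}(X)]$, where $[\mathcal{R}(X)]$ denotes the isomorphism class of the Hopf-image reduction in $\mathsf{QPB}_{\mathrm{inner,cosemi}}$; this is legitimate because Corollary~\ref{cor:hopf-image-reduction} guarantees that $\mathcal{R}(X)$ indeed lies in that subcategory. The crucial point is that well-definedness and injectivity hold simultaneously and tautologically: the chain $[X]=[Y]\Leftrightarrow X\sim Y\Leftrightarrow \mathcal{R}(X)\cong\mathcal{R}(Y)\Leftrightarrow [\mathcal{R}(X)]=[\mathcal{R}(Y)]$ is nothing but the definition of $\sim$. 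Hence $\overline{\mathcal{R}}$ is at once well defined and injective, with nothing further to check. Because $\mathsf{QPB}_{\mathrm{inner,cosemi}}$ is a \emph{full} subcategory of $\mathsf{QPB}_{\mathrm{cosemi}}$, the notions ``isomorphic in the subcategory'' and ``isomorphic in the ambient category'' coincide, so the isomorphism class appearing in the target set is unambiguous.

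Finally, surjectivity is exactly essential surjectivity. Given any object $Z=(A,\Omega^1(A),H,\delta)$ of $\mathsf{QPB}_{\mathrm{inner,cosemi}}$, Proposition~\ref{prop:R-essentially-surjective} shows that inner-faithfulness of $\delta$ forces $H_{\delta}=H$ and the largest coaction-stable two-sided ideal $I$ to vanish, so that $\mathcal{R}(Z)\cong Z$. Viewing $Z$ as an object of the larger category $\mathsf{QPB}_{\mathrm{cosemi}}$, we obtain $\overline{\mathcal{R}}([Z])=[\mathcal{R}(Z)]=[Z]$, whence every isomorphism class in the target is attained. I expect no genuine obstacle here: the only delicate point is the bookkeeping confirming that the reduction of an already inner-faithful bundle returns that bundle up to canonical isomorphism, so that surjectivity is witnessed by $Z$ itself rather than by a merely abstract preimage. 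This is precisely the content of Proposition~\ref{prop:R-essentially-surjective}, and everything else follows formally from having defined $\sim$ as the $\mathcal{R}$-pullback of the isomorphism relation.
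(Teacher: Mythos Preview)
Your proof is correct and follows essentially the same approach as the paper: both arguments observe that $\sim$ is an equivalence relation because it is the pullback of isomorphism along $\mathcal{R}$, that well-definedness and injectivity of $\overline{\mathcal{R}}$ are tautological from the definition of $\sim$, and that surjectivity is precisely Proposition~\ref{prop:R-essentially-surjective}. Your write-up is simply more explicit about the formal bookkeeping than the paper's terse version.
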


\begin{proof}
It is immediate that $\sim$ is an equivalence relation.
The Hopf-image reduction functor $\mathcal R$ therefore induces a well-defined
map
\[
\overline{\mathcal R}:
\mathsf{QPB}_{\mathrm{cosemi}}/\!\sim
\longrightarrow
\mathrm{Iso}\!\left(\mathsf{QPB}_{\mathrm{inner,cosemi}}\right),
\qquad
[(A,\Omega^1(A),H,\delta)]
\longmapsto
[\mathcal R(A,\Omega^1(A),H,\delta)].
\]

By Proposition~\ref{prop:R-essentially-surjective}, every object of
$\mathsf{QPB}_{\mathrm{inner,cosemi}}$ is isomorphic to the Hopf-image reduction
of some object of $\mathsf{QPB}_{\mathrm{cosemi}}$, hence
$\overline{\mathcal R}$ is surjective.
Injectivity follows directly from the definition of $\sim$.
Therefore $\overline{\mathcal R}$ is bijective, which proves the claim.
\end{proof}

\begin{remark}
Proposition~\ref{prop:classification-effective} shows that the classification
problem for quantum principal bundles with cosemisimple structure Hopf
algebras reduces, up to effective symmetry, to the classification of
inner--faithful quantum principal bundles.
In particular, the Hopf-image reduction provides a canonical representative
in each equivalence class, capturing the effective quantum symmetry of the
bundle.
This mirrors the classical situation, where a principal bundle with a
non--effective structure group admits a canonical reduction to a bundle with
effective group action obtained by quotienting out the kernel of the action.
\end{remark}

\section{Examples of inner-faithful coactions}
In this section, we give examples of inner-faithful coactions. One of the concrete example is for the case of the quantized coordinate algebra of a complex semisimple Lie group.
\medskip

Let $G$ be a complex semisimple Lie group and let $L_S$ be a Levi subgroup of $G$. Consider the Hopf algebras $A=\mathcal O_q(G)$ and $H=\mathcal O_q(L_S)$. We have cannonical Hopf algebra surjection map $\pi:\mathcal O_q(G)\twoheadrightarrow \mathcal O_q(L_S)$ (for example, one can see \cite{stokman1999quantized, zwicknagl2009r}), and consider the
right coaction
\[
\delta := (\mathrm{id}\otimes\pi)\circ\Delta_{\mathcal O_q(G)}:
\mathcal O_q(G)\longrightarrow
\mathcal O_q(G)\otimes\mathcal O_q(L_S).
\]

\begin{proposition}\label{prop:levi-inner-faithful}
Let $G$ be a complex semisimple Lie group and $L_S\subset G$ a Levi subgroup.
Then the
right coaction
\[
\delta :
\mathcal O_q(G)\longrightarrow
\mathcal O_q(G)\otimes\mathcal O_q(L_S).
\]
is inner--faithful. 
\end{proposition}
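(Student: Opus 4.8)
The plan is to identify the Hopf image $H_\delta$ through its explicit generators and show it exhausts $\mathcal{O}_q(L_S)$. By Proposition~\ref{prop:generator}, the Hopf image is the Hopf subalgebra of $H=\mathcal{O}_q(L_S)$ generated by the coefficient set
\[
\mathcal{C}_\delta = \{\,(\omega\otimes\mathrm{id})\delta(a)\;:\;a\in\mathcal{O}_q(G),\ \omega\in\mathcal{O}_q(G)^*\,\}.
\]
Since any Hopf subalgebra of $H$ that happens to contain all of $H$ must equal $H$, and since $H_\delta=H$ is precisely the definition of inner--faithfulness (Definition~\ref{def:innerfaithful}), it suffices to prove the single set-theoretic equality $\mathcal{C}_\delta=H$.

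The decisive step is to unfold the coaction $\delta=(\mathrm{id}\otimes\pi)\circ\Delta_{\mathcal{O}_q(G)}$ on these coefficient functionals. For $a\in\mathcal{O}_q(G)$ and $\omega\in\mathcal{O}_q(G)^*$, writing $\Delta(a)=a_{(1)}\otimes a_{(2)}$ and using linearity of $\pi$, I would record the identity
\[
(\omega\otimes\mathrm{id})\delta(a)
= \omega(a_{(1)})\,\pi(a_{(2)})
= \pi\bigl(\omega(a_{(1)})\,a_{(2)}\bigr)
= \pi\bigl((\omega\otimes\mathrm{id})\Delta(a)\bigr).
\]
I then specialize $\omega$ to the counit $\varepsilon=\varepsilon_{\mathcal{O}_q(G)}$, which is a legitimate element of $\mathcal{O}_q(G)^*$. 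The counit axiom gives $(\varepsilon\otimes\mathrm{id})\Delta(a)=a$, so that $(\varepsilon\otimes\mathrm{id})\delta(a)=\pi(a)$, exhibiting $\pi(a)$ as a member of $\mathcal{C}_\delta$ for every $a$. Because $\pi:\mathcal{O}_q(G)\twoheadrightarrow\mathcal{O}_q(L_S)$ is surjective, this yields $\mathcal{O}_q(L_S)=\pi(\mathcal{O}_q(G))\subseteq\mathcal{C}_\delta$; the reverse inclusion $\mathcal{C}_\delta\subseteq H$ is automatic. Hence $\mathcal{C}_\delta=H$, the Hopf subalgebra it generates is all of $H$, and therefore $H_\delta=H$, i.e.\ $\delta$ is inner--faithful.

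I do not expect a genuine obstacle in this argument: once Proposition~\ref{prop:generator} is available, the entire non-formal content collapses onto the counit specialization, and no quantitative or representation-theoretic input about $\mathcal{O}_q(G)$ is required. The only substantive ingredient is external to the formal computation, namely the existence of the canonical Hopf algebra surjection $\pi:\mathcal{O}_q(G)\twoheadrightarrow\mathcal{O}_q(L_S)$, which is where the semisimplicity of $G$ and the Levi structure of $L_S$ actually enter (via the cited constructions \cite{stokman1999quantized, zwicknagl2009r}). In fact the argument shows more generally that for \emph{any} surjective Hopf algebra map $\pi:A\twoheadrightarrow H$ the associated coaction $(\mathrm{id}\otimes\pi)\circ\Delta_A$ is inner--faithful, so one may optionally record this as the conceptual reason behind the example. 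A routine preliminary verification that $\delta$ is indeed a right coaction---coassociativity from $\Delta_H\circ\pi=(\pi\otimes\pi)\circ\Delta_A$ and counitality from $\varepsilon_H\circ\pi=\varepsilon_A$---is the only other thing to check, and it is immediate from $\pi$ being a Hopf algebra morphism.
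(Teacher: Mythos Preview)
Your proof is correct and follows essentially the same approach as the paper: both arguments hinge on applying $(\varepsilon\otimes\mathrm{id})$ to $\delta$ to extract $\pi(a)$ and then invoking surjectivity of $\pi$. The only cosmetic difference is that you route the conclusion through the coefficient description of Proposition~\ref{prop:generator}, whereas the paper uses the equivalent factorization characterization of inner--faithfulness directly; your closing observation that the argument works for any surjective Hopf algebra map $\pi$ is also correct and is exactly what the paper records next as Proposition~\ref{prop:regular-inner-faithful} in the special case $\pi=\mathrm{id}$.
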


\begin{proof}
Let $K\subseteq \mathcal O_q(L_S)$ be a Hopf subalgebra such that the coaction
$\delta$ factors through $K$, that is,
\[
\delta(\mathcal O_q(G))\subseteq
\mathcal O_q(G)\otimes K.
\]
Applying $(\varepsilon\otimes\mathrm{id})$ to $\delta$, we obtain
\[
\pi(\mathcal O_q(G))\subseteq K.
\]
Since $\pi:\mathcal O_q(G)\twoheadrightarrow \mathcal O_q(L_S)$ is surjective,
it follows that $K=\mathcal O_q(L_S)$.
Hence no proper Hopf subalgebra of $\mathcal O_q(L_S)$ can factor the coaction,
and $\delta$ is inner--faithful.
\end{proof}
\begin{proposition}\label{prop:regular-inner-faithful}
Let $H$ be a Hopf algebra.
Then the right coaction given by the coproduct
\[
\delta := \Delta : H \longrightarrow H \otimes H
\]
is inner--faithful.
\end{proposition}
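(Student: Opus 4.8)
The plan is to show that the coproduct $\delta = \Delta : H \to H \otimes H$ factors through no proper Hopf subalgebra of $H$, which by Proposition~\ref{prop:innerfaithful} is equivalent to inner--faithfulness. The cleanest route uses the coefficient description of the Hopf image from Proposition~\ref{prop:generator}, so I would first recall that $H_\delta$ equals the Hopf subalgebra of $H$ generated by the set $\mathcal{C}_\delta = \{(\omega \otimes \mathrm{id})\Delta(h) \mid h \in H,\ \omega \in H^*\}$. The whole proof then reduces to showing that this generating set already spans (hence generates) all of $H$.

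First I would compute $\mathcal{C}_\delta$ explicitly. For $h \in H$ we have $\Delta(h) = h_{(1)} \otimes h_{(2)}$, so $(\omega \otimes \mathrm{id})\Delta(h) = \omega(h_{(1)})\,h_{(2)}$ for $\omega \in H^*$. The key observation is that the counit $\varepsilon \in H^*$ is an admissible choice of functional, and the counit axiom gives
\[
(\varepsilon \otimes \mathrm{id})\Delta(h) = \varepsilon(h_{(1)})\,h_{(2)} = h.
\]
Thus every element $h \in H$ lies in $\mathcal{C}_\delta$, so $\mathcal{C}_\delta = H$. Since $H$ is trivially a Hopf subalgebra of itself containing $\mathcal{C}_\delta$, the Hopf subalgebra generated by $\mathcal{C}_\delta$ is all of $H$, whence $H_\delta = H$ and $\delta$ is inner--faithful.

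An equivalent and perhaps more transparent argument avoids Proposition~\ref{prop:generator} and works directly from the definition: suppose $L \subseteq H$ is a Hopf subalgebra with $\Delta(H) \subseteq H \otimes L$. Applying $\varepsilon \otimes \mathrm{id}$ to this inclusion and using the counit axiom yields $H = (\varepsilon \otimes \mathrm{id})\Delta(H) \subseteq L$, forcing $L = H$; by Proposition~\ref{prop:innerfaithful}(2)$\Rightarrow$(1) the coaction is inner--faithful. I expect no genuine obstacle here: the only point requiring a moment's care is confirming that $\Delta$ genuinely satisfies the coaction axioms of Definition~\ref{def:facto}, which is exactly coassociativity and counitality of the coproduct, so the statement is essentially a one-line consequence of the counit axiom. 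The mild subtlety worth flagging is that the argument exploits the special feature that $\delta = \Delta$ maps \emph{into} $H \otimes H$ with $H$ acting on itself, so the algebra $A = H$ is large enough to detect all of $H$ through its own coproduct—a contrast with generic coactions where the Hopf image can be a proper subalgebra.
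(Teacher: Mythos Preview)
Your proposal is correct and essentially identical to the paper's proof: the paper argues directly that if $\Delta(H)\subseteq H\otimes K$ for a Hopf subalgebra $K$, then applying $\varepsilon\otimes\mathrm{id}$ and the counit axiom forces $H\subseteq K$, hence $K=H$. Your first argument via Proposition~\ref{prop:generator} is just a repackaging of the same counit trick, and your second argument is verbatim the paper's.
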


\begin{proof}
Let $K \subseteq H$ be a Hopf subalgebra such that the coaction $\Delta$ factors
through $K$, i.e.
\[
\Delta(H) \subseteq H \otimes K.
\]
Apply $(\varepsilon \otimes \mathrm{id})$ to $\Delta(h)$ for $h \in H$.
Using the counit axiom $(\varepsilon \otimes \mathrm{id})\circ \Delta = \mathrm{id}$,
we obtain
\[
h = (\varepsilon \otimes \mathrm{id})\Delta(h) \in K.
\]
Thus $H \subseteq K$, and hence $K = H$.
Therefore no proper Hopf subalgebra of $H$ can factor the coaction, and $\Delta$
is inner--faithful.
\end{proof}

\section*{Acknowledgements}
The author is grateful for the hospitality of the Indian Statistical Institute, Kolkata, where this article was written.
\bibliographystyle{plain}
\bibliography{name}

\end{document}